\DeclareMathOperator{\Hom}{Hom}
\DeclareMathOperator{\res}{res}
\newcommand{\mF}{{\mathbb F}}
\newcommand{\mN}{{\mathbb N}}
\newcommand{\mR}{{\mathbb R}}
\newcommand{\mZ}{{\mathbb Z}}
\newcommand{\Jc}{{\mathcal J}}
\newcommand{\iso}{\cong}
\newcommand{\sm}{\wedge}
\newcommand{\xra}{\xrightarrow}
\newcommand{\td}[1]{\langle #1\rangle}
\renewcommand{\to}{\longrightarrow}
\numberwithin{equation}{section}
\newtheorem{theorem}[equation]{Theorem}
\newtheorem{cor}[equation]{Corollary}
\newtheorem{prop}[equation]{Proposition}
\theoremstyle{definition}
\newtheorem{defn}[equation]{Definition}
\newtheorem{rk}[equation]{Remark}
\newtheorem{eg}[equation]{Example}
\newtheorem{construction}[equation]{Construction}
\begin{document}

\title[Representation-graded Bredon homology of elementary abelian 2-groups]
{Representation-graded Bredon homology\\of elementary abelian 2-groups}
\author{Markus Hausmann}
\author{Stefan Schwede}
\email{hausmann@math.uni-bonn.de, schwede@math.uni-bonn.de}
\address{Mathematisches Institut, Universit{\"a}t Bonn, Germany}

\begin{abstract}
  We calculate the representation-graded Bredon homology rings
  of all elementary abelian 2-groups with coefficients in the constant mod-2 Mackey functor.
  We exhibit minimal presentations for these rings
  as quotients of the polynomial algebra on the pre-Euler and inverse Thom classes
  of all nontrivial characters,
  subject to an explicit finite list of relations arising from orientability properties.
  Two corollaries of our presentation are the calculation,
  originally due to Holler and Kriz, of the geometric fixed point rings,
  and a strengthening of a calculation of Balmer and Gallauer
  of the localized twisted cohomology ring.\medskip\\
  2020 MSC: 55N91, 55P91, 55Q91 
\end{abstract}

\maketitle

\section*{Introduction}

In this paper we establish minimal presentations for the representation-graded Bredon homology
rings of all elementary abelian 2-groups,
with coefficients in the constant mod-2 Mackey functor $\underline\mF_2$.
More specifically, we determine the `effective cone' of the $R O$-graded Bredon homology ring,
i.e., the sector given by the reduced Bredon homology groups
of linear representation spheres; we denote this multigraded ring by $H(A,\star)$.
The following is our main result,
to be proved as Theorem \ref{thm:Bredon_RO-graded} below.\smallskip

{\bf Theorem.} {\em
  Let $A$ be an elementary abelian 2-group.
  The $\mF_2$-algebra $H(A,\star)$ is generated
    by the pre-Euler classes $a_\lambda$ and the inverse Thom classes $t_\lambda$
    for all nontrivial $A$-characters $\lambda$.
    The ideal of relations between the classes $a_\lambda$ and $t_\lambda$
    is generated by the polynomials
    \[ \sum_{\lambda\in T}  a_\lambda \cdot \bigg( \prod_{\mu\in T\setminus\{\lambda\}} t_\mu\bigg) \]
    for all minimally dependent sets $T$ of nontrivial $A$-characters.
}\smallskip

In \cite[Theorem 3.5]{hausmann-schwede:bordism}, we use the presentation of
$H(A,\star)$ to establish a `global' universal property of
mod-2 Bredon homology, i.e., of the system of all representation-graded Bredon
homology rings of all elementary abelian $2$-groups, including the functoriality in
group homomorphisms. In the language of \cite{hausmann-schwede:bordism}, Bredon
homology is an initial additively
oriented $\textrm{el}^{R O}_2$-algebra.

The fact that $H(A,\star)$ is generated by the classes $a_\lambda$ and $t_\lambda$
was previously shown by Holler and Kriz \cite{holler-kriz}.
Our main new contribution is determining the relations between these classes.
The relations listed in the theorem are minimal, i.e., none of them can be omitted.
The origin of the relations is the fact that 
for a minimally dependent set $T$ of nontrivial $A$-characters,
the $A$-representation $\bigoplus_{\lambda\in T}\lambda$ is orientable,
see Proposition \ref{prop:x-relation}.

We illustrate our result for elementary abelian 2-groups of small rank.
When $A=C=\{\pm 1\}$ is of order 2, the calculation is classical,
originally due to Stong (unpublished), and reproved by several authors.
In this case there is only one nontrivial character, and no relations;
so $ H(C,\star) =\mF_2[a,t]$ is a polynomial algebra
on the pre-Euler class and the inverse Thom class.
The ring $H(C^2,\star)$ was calculated by Ellis-Bloor in \cite[Theorem 4.14]{ellis-bloor}.
In this case there are three nontrivial characters
$p_1,p_2$ and $\mu$, and all relations are generated by the single relation
\[ a_1 t_2 t_\mu + t_1 a_2 t_\mu + t_1 t_2 a_\mu\ = \ 0\ .\]
So the minimal presentation of $H(C^2,\star)$ is
\[  H(C^2,\star) \ =\ \mF_2[a_1,a_2,a_\mu,t_1,t_2,t_\mu] / (a_1 t_2 t_\mu + t_1 a_2 t_\mu + t_1 t_2 a_\mu)\ .\]

To the best of our knowledge, the presentation of $H(A,\star)$ is new when the rank of $A$
exceeds two.
For $A=C^3$ we make our presentation of $H(C^3,\star)$
completely explicit in Example \ref{eg:rank3}. In this case there are
14 polynomial generators, namely the classes $a_\lambda$ and $t_\lambda$
for each of the seven nontrivial characters $\lambda$, and 14 minimal relations.
Of these relations, seven are cubic in the generators, and of the same general form
as in the previous example, i.e.,
$a_{\alpha} t_{\beta} t_{\gamma} + t_{\alpha} a_{\beta} t_{\gamma} + t_{\alpha} t_{\beta} a_{\gamma}  = 0$
for all triples of distinct nontrivial characters that satisfy  $\alpha\cdot\beta\cdot\gamma=1$.
And there are seven minimal relations that are homogeneous of degree 4 in the generators,
of the form
\[  a_{\alpha} t_{\beta} t_{\gamma} t_\delta\ +\ t_{\alpha} a_{\beta} t_{\gamma} t_\delta \ +\ t_{\alpha} t_{\beta} a_{\gamma}t_\delta\  +\ t_{\alpha} t_{\beta} t_{\gamma}a_\delta \ =\ 0\]
for quadruples of distinct nontrivial characters
that satisfy  $\alpha\cdot\beta\cdot\gamma\cdot\delta=1$.

The number of minimal relations grows very quickly in the rank of the elementary abelian
2-group, see the table in Remark \ref{rk:count_relations}. However, a basic pattern continues
as follows: when $A$ has rank $r$, then a new family of relations appears
that has no predecessor for smaller rank, given by homogeneous polynomials
of degree $r+1$ in the generators.

We use our presentation of $H(A,\star)$ to derive two interesting corollaries.
Inverting the pre-Euler classes $a_\lambda$ for all nontrivial $A$-characters
and restricting to integer gradings yields the $A$-geometric fixed point
ring $\Phi^A_*(H\underline\mF_2)$ of mod-2 Bredon homology. This ring was previously
calculated by Holler and Kriz in \cite[Theorem 2]{holler-kriz},
who also gave a formula for the Poincar{\'e} series of the multigraded ring
$H(A,\star)$ in \cite[Theorem 5]{holler-kriz},
and showed that $H(A,\star)$ maps isomorphically onto
the subring of $\Phi^A_*(H\mF_2)[a_\lambda]$ generated by the classes
$a_\lambda$ and  $t_\lambda=x_\lambda\cdot a_\lambda$ for all nontrivial $A$-characters,
with the notation as in Corollary \ref{cor:holler-kriz}.
We explain in Corollary \ref{cor:holler-kriz} how our presentation
of $H(A,\star)$ yields the Holler-Kriz presentation of  $\Phi^A_*(H\mF_2)$
upon localization.
A noticeable feature is that inverting the pre-Euler classes
makes all polynomial relations of degree at least four redundant.

More generally, we consider a subgroup $B$ of $A$ and determine the `mixed' localization
obtained by inverting the pre-Euler classes of all characters
that restrict nontrivially to $B$,
and the inverse Thom classes of all other characters.
The resulting integer-graded ring $H(A|B)$ previously featured in
the work of Balmer and Gallauer \cite{balmer-gallauer}
on the Balmer spectrum of the tt-category of permutation modules.
We obtain an explicit presentation of the ring 
$H(A|B)$ by generators and relations in Theorem \ref{thm:localization};
here, too, all polynomial relations
of degree at least four in the minimal presentation of $H(A,\star)$
become redundant in the localization.
Our calculation improves \cite[Theorem 8.13]{balmer-gallauer},
for the prime $2$, from a `presentation modulo nilpotence' to
an actual presentation.
The graded ring $H(A,\star)$ is a domain,
see Theorem \ref{thm:domain} and Remark \ref{rk:domain};
 so its localization $H(A|B)$ is a domain, too.
\smallskip

{\bf Acknowledgments.}
The authors are members of the Hausdorff Center for Mathematics
at the University of Bonn (DFG GZ 2047/1, project ID 390685813).
A substantial part of the work for this paper was done while the second author
spent the summer term 2023 on sabbatical at Stockholm University, with financial support 
from the Knut and Alice Wallenberg Foundation;
the second author would like to thank SU for the hospitality
and stimulating atmosphere during this visit.

\section{Representation-graded Bredon homology} 

In this section we review some basic features of Bredon homology in a form adapted
for our purposes. This section does not contain any new mathematics.
What is now called {\em Bredon cohomology} was introduced by Bredon in \cite{bredon}
for finite groups and equivariant CW-complexes. The corresponding equivariant homology
theory was introduced by Illman in \cite{illman:equivariant singular}.
Illman develops the theory for arbitrary topological groups,
and he uses singular chains to define the equivariant homology
and cohomology groups on arbitrary equivariant spaces.
In many ways, Bredon homology and cohomology are the correct generalizations
of singular \mbox{(co-)}homology to the equivariant context,
and of fundamental importance in equivariant topology.

\begin{construction}[Bredon homology]
  We recall one construction of Bredon homology with coefficients in a constant Mackey functor.
  We employ a definition that is naturally isomorphic to the original one of Bredon and Illman,
  namely as the equivariant homology theory represented by the Eilenberg-MacLane $G$-spectrum
  $H\underline M$ of the constant Mackey functor
  $\underline M$ associated to an abelian group $M$.
  In other words, we define the $m$th reduced $G$-equivariant Bredon homology group
  of a based $G$-space $X$ with $\underline M$-coefficients as
  \[ \tilde H_m^G(X;\underline M) \ = \ \pi_m^G(H\underline M\sm X)\ .\]
  The groups $\tilde H^G_*(-;\underline M)$ form an equivariant homology theory.
  In particular, they come with a suspension isomorphism,
  and a based $G$-map gives rise to a long exact sequence featuring the Bredon homology groups
  of source, target and the reduced mapping cone.
  We mostly consider $\underline \mF_2$-coefficients in this paper,
  and we will drop $\underline \mF_2$-coefficients from the notation.

  The commutative $G$-ring spectrum structure of $H\underline\mF_2$ gives
  rise to associative, commutative and bilinear pairings
  \[ \cdot \ : \ \tilde H_m^G(X)\times \tilde H_n^G(Y)\ \to \ \tilde H^G_{m+n}(X\sm Y) \]
  for all based $G$-spaces $X$ and $Y$.
\end{construction}

In this paper a {\em representation} of a finite group is a finite-dimensional
orthogonal representation.
Our results are about the $\underline\mF_2$-Bredon homology groups
of representation spheres, i.e., onepoint compactifications $S^V$ of such representations $V$.
The following proposition collects some well-known general facts about these;
we give proofs for the readers' convenience.
Part~(iii) says that automorphisms of representation spheres
are invisible to the eyes of Bredon homology with  $\underline\mF_2$-coefficients.
This means that we can -- and will -- safely ignore the distinction
between $G$-representations and their isomorphism classes.

\begin{prop}\label{prop:vanish high degrees}
  Let $V$ be a $d$-dimensional representation of a finite group $G$.  
  \begin{enumerate}[\em (i)]
  \item
    The group $\tilde H_k^G(S^V)$ is trivial for $k<0$ and for $k>d$,
    and the restriction homomorphism
    \[ \res^G_1\ :\  \tilde H_d^G(S^V)\ \to \  \tilde H_d(S^V) \]
    is an isomorphism. Hence the $\mF_2$-vector space $\tilde H_d^G(S^V)$ is 1-dimensional.
  \item
    If $V$ is orientable, then the restriction homomorphism for constant integer coefficients
    \[ \res^G_1\ :\  \tilde H_d^G(S^V;\underline{\mZ})\ \to \  \tilde H_d(S^V;\mZ) \]
    is an isomorphism.
    Hence the abelian group $\tilde H_d^G(S^V;\underline\mZ)$ is free of rank 1.
  \item
    For every based $G$-homotopy equivalence $\psi:S^V\to S^V$ and every $m\geq 0$, the map
    \[  \tilde H^G_m(\psi)\ : \ \tilde H^G_m(S^V) \ \to \  \tilde H^G_m(S^V)   \]
    is the identity.
\end{enumerate}
\end{prop}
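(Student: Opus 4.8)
The plan is to reduce to parts (i) and (ii), which already pin down the Bredon homology of $S^V$ in terms of an ordinary (non-equivariant) reduced homology group. The statement for $m < d$ and $m > d$ is immediate from part (i): in those degrees $\tilde H_m^G(S^V)$ is the zero group, so any endomorphism is the identity. So the only case requiring work is $m = d$, the top degree. Here part (i) tells us that $\tilde H_d^G(S^V)$ is a $1$-dimensional $\mF_2$-vector space, and more importantly that the restriction map $\res^G_1 \colon \tilde H_d^G(S^V) \to \tilde H_d(S^V)$ is an isomorphism. Since $\res^G_1$ is natural with respect to based $G$-maps (it is induced by the forgetful functor from $G$-spectra to spectra, or equivalently by restriction along the trivial subgroup), the square relating $\tilde H_d^G(\psi)$ and $\tilde H_d(\psi)$ commutes. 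Thus it suffices to prove that the non-equivariant map $\tilde H_d(\psi) \colon \tilde H_d(S^V) \to \tilde H_d(S^V)$ is the identity.

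Now I would argue non-equivariantly. Forgetting the $G$-action, $\psi \colon S^V \to S^V$ is a based homotopy equivalence of a $d$-sphere, hence has degree $\pm 1$, and $\tilde H_d(\psi;\mZ)$ is multiplication by $\deg(\psi)$. Reducing mod $2$, the induced map on $\tilde H_d(S^V;\mF_2) \cong \mF_2$ is multiplication by $\deg(\psi) \bmod 2 = 1$, i.e., the identity. This already settles the $\underline\mF_2$-coefficient statement. (If one wanted the analogous integral statement one would instead invoke part (ii), which requires $V$ orientable and then needs an argument that $\psi$ has degree $+1$; but the proposition as stated only asks about $\underline\mF_2$-coefficients, so the mod-$2$ collapse of signs is exactly what makes the clean statement possible, and this is the real point of the proposition.)

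The one genuinely non-formal input, and the step I expect to be the main obstacle to get precisely right, is the naturality of $\res^G_1$ together with the fact that it is an isomorphism in degree $d$: this is what lets us transport the computation from the equivariant world, where endomorphisms of a $1$-dimensional $\mF_2$-space are either $0$ or the identity but we cannot a priori rule out $0$, down to the non-equivariant world, where degree theory immediately gives the answer. Concretely, I would note that $\tilde H_d^G(\psi)$ is an isomorphism (as $\psi$ is an equivariant equivalence, it induces isomorphisms on all Bredon homology groups), hence nonzero, hence the identity on a $1$-dimensional $\mF_2$-space — which is already enough and does not even require the degree computation. So in fact the cleanest route is: by part (i) the group is $\mF_2$ in degree $d$ and $0$ otherwise; a $G$-homotopy equivalence induces an isomorphism in every degree; the only isomorphism of the zero group or of $\mF_2$ is the identity. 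I would present this as the main line of argument, mentioning the restriction map only as the bridge used in the proof of part (i).
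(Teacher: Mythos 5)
There is a genuine gap: you have misread part (i). The vanishing in (i) is only for $k<0$ and $k>d$; in the intermediate range $0\le m<d$ the groups $\tilde H_m^G(S^V)$ are in general \emph{not} zero --- they are precisely the groups the whole paper computes. For example $\tilde H_0^G(S^\lambda)$ is $1$-dimensional, spanned by $a_\lambda$, and for $A=C^2$ the group $\tilde H_1^A(S^{p_1\oplus p_2})$ is $2$-dimensional, spanned by $a_{p_1}t_{p_2}$ and $t_{p_1}a_{p_2}$. So your ``cleanest route'' (the group is $\mF_2$ in degree $d$ and $0$ otherwise, and an isomorphism of $0$ or of $\mF_2$ is the identity) only disposes of the degrees $m=d$ and $m>d$; in degrees $0\le m<d$ knowing that $\tilde H^G_m(\psi)$ is an isomorphism does not force it to be the identity, since the group can have $\mF_2$-dimension greater than one. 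These intermediate degrees are exactly where the statement is needed later (independence of the chosen representative of a representation class and strict commutativity of the product in $H(G,\star)$ use (iii) in all degrees $m$), so the case you dismiss is the substance of the claim. Your top-degree discussion via the restriction map and degree theory is correct but, as you note yourself, superfluous once the group is known to be $1$-dimensional.

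The paper's argument closes this gap by a multiplicativity, not a dimension-count, argument: since $\psi$ is a based $G$-homotopy equivalence, its stable class $\td{\psi}$ in the equivariant $0$-stem $\pi_0^G(\mS)$ is a unit; because Bredon homology is represented by the commutative $G$-ring spectrum $H\underline\mF_2$, the induced map $\tilde H^G_*(\psi)$ on $\tilde H^G_*(S^V)=\pi_*^G(H\underline\mF_2\sm S^V)$ is multiplication by the Hurewicz image of $\td{\psi}$ in $\pi_0^G(H\underline\mF_2)\iso\mF_2$, and the only unit of $\mF_2$ is $1$. This identifies $\tilde H^G_m(\psi)$ with the identity in \emph{every} degree at once, with no hypothesis on the size of the individual groups. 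To repair your proof you would need some such module-structure (or cellular/chain-level) input; naturality of $\res^G_1$ alone cannot help, because in intermediate degrees the restriction map is far from injective.
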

\begin{proof}
  (i) The groups $\tilde H_*^G(S^V)$ can be calculated from
  the reduced cellular chain complex of a $G$-CW-structure on $S^V$
  by taking first $G$-fixed points and then homology.
  Since the reduced cellular chain complex is concentrated 
  in dimensions 0 through $d$, the vanishing claims follow.
  Because the underlying chain complex calculates the reduced homology
  of the underlying space $S^V$, the kernel of the top differential
  $\delta_d:C_d^{\text{cell}}(S^V;\mF_2)\to C_{d-1}^{\text{cell}}(S^V;\mF_2)$
  is 1-dimensional and necessarily with trivial $G$-action. So the kernel of 
  $\delta_d^G:(C_d^{\text{cell}}(S^V;\mF_2))^G\to (C_{d-1}^{\text{cell}}(S^V;\mF_2))^G$
  is also 1-dimensional, yielding the second claim.
  The argument for constant integer coefficients in (ii) is essentially the same;
  the orientability assumption is equivalent to the condition
  that $G$ acts trivially on the group of $d$-cycles in the underlying chain complex.

  (iii)
  Because $\psi:S^V\to S^V$ is a $G$-homotopy equivalence, its class
  in the $G$-equivariant 0-stem is a unit.
  Because Bredon homology is represented by the commutative $G$-ring spectrum
  $H\underline\mF_2$, the map $\tilde H^G_*(\psi)$ equals multiplication
  by the Hurewicz image of $\td{\psi}$ in $\pi_0^G(H\underline\mF_2)$.
  The ring $\pi_0^G(H\underline\mF_2)$ is isomorphic to $\mF_2$, so 1 is its only unit.
  This proves the claim.
\end{proof}

\begin{construction}[The representation-graded ring $H(G,\star)$]
  We introduce notation to deal with the representation-grading.
  For a finite group $G$, we let $J_G$ be the abelian monoid, under direct sum,
  of isomorphism classes of $G$-representations with trivial $G$-fixed points.
  So $J_G$ is freely generated by the
  isomorphism classes of nontrivial irreducible $G$-representations.
  
  We choose a representative for each element of $J_G$. For $\rho\in J_G$ and $m\geq 0$,
  we define
  \[ H_m(G,\rho)\ = \ \tilde H_m^G(S^V)\ , \]
  where $V$ is the chosen representative of $\rho$.
  Proposition \ref{prop:vanish high degrees} (iii) guarantees that this definition
  is independent of the representative up to preferred isomorphism,
  induced by any isomorphism of representations.
  
  The pairing of Bredon homology induces an associative, commutative and bilinear pairing
  \[ \cdot \ : \ H_m(G,\rho)\times H_n(G,\kappa)\ \to \ H_{m+n}(G,\rho+\kappa) \]
  for $\rho,\kappa\in J_G$ and $m,n\geq 0$.
  This map is defined as the composite
  \[ \tilde H_m^G(S^V)\times \tilde H_n^G(S^W)\  \xra{\ \cdot\ }\
    \tilde H_{m+n}^G(S^V\sm S^W)\ \iso \  \tilde H_{m+n}^G(S^U)\ , \]
  where $V$, $W$ and $U$ are the chosen representatives of $\rho$, $\kappa$
  and $\rho+\kappa$, respectively.
  The last isomorphism is induced by a choice of $G$-equivariant isomorphism
  $V\oplus W\iso U$; it is independent of the choice
  by Proposition \ref{prop:vanish high degrees} (iii).
  We emphasize that this multiplication is strictly commutative, i.e., for
  $x\in H_m(G,\rho)$ and $y\in H_n(G,\kappa)$, the classes
  $x\cdot y$ and $y\cdot x$ are {\em equal} in the group
  $H_{m+n}(G,\rho+\kappa)=H_{n+m}(G,\kappa+\rho)$.
  This, one more time, uses that automorphisms of representation spheres
  are invisible in $\tilde H^G_*$.
  The multiplication maps thus make the collection of groups $H_m(G,\rho)$
  into a commutative $(\mN\times J_G)$-graded $\mF_2$-algebra.
  We denote this object by $H(G,\star)$ and refer to it as the
  {\em representation-graded Bredon homology ring} of the group $G$.
  We will routinely abuse notation by identifying a $G$-representation $V$ with
  trivial fixed points with its class in $J_G$; thus we shall write
  $H_m(G,V)$ for $H_m(G,[V])$.
\end{construction}

\begin{rk}
  Bredon homology with coefficients in a Mackey functor is represented by
  a genuine $G$-spectrum, and hence can be extended to a homology theory
  for $G$-spaces that is $R O(G)$-graded. Our results are only about
  the `effective cone' of the $R O(G)$-graded coefficient ring,
  i.e., the sector given by Bredon homology of representation spheres.
  The effective cone has a much nicer algebraic structure than the rest
  of the $R O(G)$-graded Bredon homology, which tends to contain
  many nilpotent classes and trivial products.
  The effective cone contains the pre-Euler and inverse Thom classes,
  so it determines the geometric fixed points (obtained by inverting all pre-Euler classes),
  and various other localizations, see Construction \ref{con:invert classes} below.
\end{rk}

We recall two kinds of classes that exist for every $G$-representation,
the pre-Euler class and the inverse Thom class.
Our pre-Euler class is also called `Euler class' by other authors.

\begin{construction}[Pre-Euler and inverse Thom classes]
  We let $V$ be  a $G$-representation. The {\em pre-Euler class}
  \[ a_V\ \in \ H_0(G,V) \]
  is the image of the multiplicative unit $1\in H_0(G,0)$
  under the homomorphism induced by the based $G$-map $S^0\to S^V$
  that sends the point $0$ to the $G$-fixed point $0$ in $S^V$.
  The pre-Euler class can be zero, for example if $V$ has nonzero $G$-fixed points.
  
  For $d=\dim(V)$, the {\em inverse Thom class} is the unique nonzero element
  \[ t_V\ \in \ H_d(G,V)\ , \]
  compare Proposition \ref{prop:vanish high degrees} (i).
  If $W$ is another $G$-representation, then
  \[ a_V\cdot a_W\ =\ a_{V\oplus W}\text{\qquad and\qquad} t_V\cdot t_W=t_{V\oplus W}\]
  in $H_*(G,V\oplus W)$.
\end{construction}

In this paper, a {\em character} of a finite group $G$
is a group homomorphism $\lambda:G\to C=\{\pm 1\}$.
We shall routinely confuse a character $\lambda$
with the 1-dimensional $G$-representation on $\mR$ in which $g\in G$
acts by multiplication by $\lambda(g)$.
Elementary abelian 2-groups are characterized among finite groups
by the property that all irreducible real representations are 1-dimensional,
and hence given by characters.

We will make use of the Bockstein homomorphism
\[ \beta\ : \ \tilde H_m^G(X)\ \to \ \tilde H_{m-1}^G(X) \]
associated to the short exact sequence of constant Mackey functors
$\underline\mF_2\to\underline\mZ/4\to\underline\mF_2$.

\begin{eg}
  Let $\lambda:G\to C$ be a nontrivial character with kernel $K$.
  The minimal $G$-CW-structure of $S^\lambda$ with two fixed 0-cells
  and one 1-cell with isotropy $K$ shows that for every abelian group $M$,
  the group $\tilde H_n^G(S^\lambda;\underline M)$ is trivial for $n\ne 0,1$.
  And it yields an exact sequence of Bredon homology groups:
    \begin{align*}
     0 \to  \tilde H^G_1(S^\lambda;\underline M)\to
      H^G_0(G/K;\underline M)\to H^G_0(G/G;\underline M) \xra{\cdot a_\lambda}
      \tilde H_0^G(S^\lambda;\underline M)\to 0
  \end{align*}
  The middle two groups are coefficient groups of the Mackey functor,
  and thus equal to $M$. The middle homomorphism
  is the transfer from $K$ to $G$ in the Mackey functor $\underline M$,
  i.e., multiplication by the index $[G:K]=2$.
  So the groups $\tilde H_n^G(S^\lambda;\underline M)$
  in dimension 1 and 0 are isomorphic to the kernel and cokernel,
  respectively, of multiplication by $2$ on $M$.
  For $M=\mF_2$ we conclude that $H_1(G,\lambda)$ and $H_0(G,\lambda)$
  are 1-dimensional, generated by the inverse Thom class $t_\lambda$
  and the pre-Euler class $a_\lambda$, respectively.

  We claim that the Bockstein homomorphism takes $t_\lambda$ to $a_\lambda$.
  Indeed, the short exact sequence of Mackey functors
  $\underline\mF_2\to\underline\mZ/4\to\underline\mF_2$
  yields an exact sequence of Bredon homology groups:
  \begin{align*}
    \tilde H^G_1(S^\lambda;\underline\mF_2)\ \xra{\ \beta\ }   \tilde H^G_0(S^\lambda;\underline\mF_2)\to
    \tilde H^G_0(S^\lambda;\underline{\mZ}/4)\to
    \tilde H^G_0(S^\lambda;\underline\mF_2)\to 0
  \end{align*}
  Since the four nontrivial groups in this sequence are all cyclic of order 2,
  exactness implies that the Bockstein homomorphism is an isomorphism.
  Since source and target are spanned by $t_\lambda$ and $a_\lambda$, respectively,
  the Bockstein satisfies $\beta(t_\lambda)=a_\lambda$.
\end{eg}

\begin{prop}\label{prop:x-relation}
  Let $G$ be a finite group.
  \begin{enumerate}[\em (i)]
  \item If $V$ is an orientable $G$-representation, then $\beta(t_V)=0$.
  \item Let $T$ be a set of $G$-characters whose product is 1. Then
    \[
      \sum_{\lambda\in T} a_\lambda \cdot \bigg( \prod_{\mu\in T\setminus\{\lambda\}} t_\mu\bigg)
      \ = \ 0\ .\]
\end{enumerate}
\end{prop}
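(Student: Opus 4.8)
The plan is to deduce (ii) from (i) together with the Leibniz rule for the Bockstein and the fact that $\beta(t_\lambda) = a_\lambda$ from the previous example. First I would establish (i): if $V$ is orientable, then by Proposition \ref{prop:vanish high degrees}(ii) the group $\tilde H_d^G(S^V;\underline\mZ)$ is free of rank $1$, and the map to $\tilde H_d(S^V;\mZ) \cong \mZ$ is an isomorphism. The inverse Thom class $t_V$ in $H_d(G,V) = \tilde H_d^G(S^V;\underline\mF_2)$ is the mod-$2$ reduction of a generator of $\tilde H_d^G(S^V;\underline\mZ)$ — this follows because reduction mod $2$ commutes with restriction to the trivial subgroup, and the restriction of $t_V$ is the nonzero class $t_V$ in the nonequivariant $\tilde H_d(S^V;\mF_2)$, which is itself the reduction of a generator of $\tilde H_d(S^V;\mZ)$. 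Since $t_V$ lifts to integral coefficients, the Bockstein $\beta(t_V)$ associated to $\underline\mF_2 \to \underline\mZ/4 \to \underline\mF_2$ vanishes, because the Bockstein of any class that lifts along $\underline\mZ \to \underline\mF_2$ is zero (the lift factors the connecting map through the zero map, using that $\underline\mZ/4$ receives a map from $\underline\mZ$ compatibly).

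For (ii), I would apply (i) to the representation $V = \bigoplus_{\mu\in T}\mu$. Since the product of the characters in $T$ is $1$, the determinant of this representation is trivial, so $V$ is orientable; hence $\beta(t_V) = 0$ by (i). Now $t_V = \prod_{\mu\in T} t_\mu$ by multiplicativity of the inverse Thom class. The key input is that the Bockstein $\beta$ is a derivation with respect to the multiplicative structure of Bredon homology: for classes $x \in H_m(G,\rho)$ and $y \in H_n(G,\kappa)$ one has $\beta(x\cdot y) = \beta(x)\cdot y + x\cdot \beta(y)$, with no sign since we are in characteristic $2$. This Leibniz rule holds because $\beta$ is the boundary map associated with a short exact sequence of ring-like coefficient objects, or more precisely because it is induced by a self-map of degree $-1$ of $H\underline\mF_2$ that is a derivation of the commutative ring spectrum structure. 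Applying the derivation property repeatedly to $t_V = \prod_{\mu\in T}t_\mu$ and using $\beta(t_\mu) = a_\mu$ gives
\[
0 \ = \ \beta\bigg(\prod_{\mu\in T} t_\mu\bigg) \ = \ \sum_{\lambda\in T} a_\lambda\cdot\bigg(\prod_{\mu\in T\setminus\{\lambda\}} t_\mu\bigg),
\]
which is exactly the claimed relation.

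The main obstacle I anticipate is justifying the Leibniz rule for the Bockstein in this representation-graded setting cleanly — one must check that the identification of $\beta$ with multiplication by a derivation-like class on $H\underline\mF_2$ is compatible with the pairings that define the multiplication on $H(G,\star)$, and in particular with the isomorphisms $S^V\wedge S^W \cong S^U$ used to set up that multiplication. A secondary point requiring care is the precise claim that $t_V$ is the mod-$2$ reduction of an integral generator: this uses the naturality of the coefficient reduction map under restriction to the trivial subgroup together with Proposition \ref{prop:vanish high degrees}(i)--(ii), and one should make sure the orientability hypothesis is used correctly (it is exactly what makes the integral group free of rank $1$ with the restriction an isomorphism). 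Both of these are standard facts about Eilenberg–MacLane spectra and Bocksteins, so the argument is essentially bookkeeping once they are in place.
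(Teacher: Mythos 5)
Your argument is correct and follows essentially the same route as the paper: part (i) by lifting $t_V$ to $\underline\mZ$-coefficients via Proposition \ref{prop:vanish high degrees}(ii), and part (ii) by applying (i) to $V=\bigoplus_{\lambda\in T}\lambda$ (orientable since its determinant is the trivial character) together with the derivation property of the Bockstein and $\beta(t_\lambda)=a_\lambda$. The extra details you supply (that $t_V$ is the mod-2 reduction of an integral generator, and that reduction through $\underline\mZ/4$ kills the Bockstein) are correct elaborations of steps the paper leaves implicit.
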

\begin{proof}
  (i) If $V$ is orientable, then by Proposition \ref{prop:vanish high degrees} (ii),
  the inverse Thom class $t_V$ lifts to a class in 
  the Bredon homology with constant integral coefficients.
  So the image of $t_V$ under the Bockstein homomorphism is trivial.

  (ii) We let $V=\bigoplus_{\lambda\in T}\lambda$ be the sum of the characters in the set $T$.
  The determinant of $V$ is the product of the characters in $T$, which is trivial
  by hypothesis. So the $G$-representation $V$ is orientable, and hence $\beta(t_V)=0$
  by part (i).
  The Bockstein homomorphism is a derivation, in the sense that
  $\beta(x\cdot y) =  \beta(x)\cdot y + x\cdot \beta(y)$
  for all classes $x\in\tilde H_m^G(X)$ and $y\in\tilde H_n^G(Y)$.
  Applying the derivation property repeatedly and using that $\beta(t_\lambda)=a_\lambda$ shows that
\[  0 \ = \ \beta(t_V) \ = \  \beta\big({\prod}_{\lambda\in T}t_\lambda\big) \ = \
  {\sum}_{\lambda\in T}\ a_\lambda\cdot \big( {\prod}_{\mu\in T\setminus\{\lambda\}} t_\mu\big) \ .
  \vspace*{-.5cm} \]
\end{proof}

\section{Bredon homology for elementary abelian 2-groups}

In this section we specialize from general finite groups to elementary abelian
$2$-groups, and we prove our main result, Theorem \ref{thm:Bredon_RO-graded}.
There we exhibit a presentation of $H(A,\star)$ as the quotient of a
polynomial $\mF_2$-algebra on the classes $a_\lambda$ and $t_\lambda$
for all nontrivial $A$-characters $\lambda$,
by an explicit minimal set of homogeneous polynomial relations.
Along the way, we give an elementary and self-contained proof
that the ring $H(A,\star)$ is a domain, see Theorem \ref{thm:domain}.

Many of our arguments involve bootstrapping information about Bredon homology
of a subgroup to the ambient group.
In those arguments, we need to restrict representations to subgroups,
which typically creates new fixed points.
If $V$ is a $G$-representation with $V^G=0$, and $K$ a subgroup of $G$, 
we set $k=\dim(V^K)$, and we let $V_K=V-V^K$ be the orthogonal complement
of the $K$-fixed points in $V$.
The {\em restriction homomorphism}
\[ \res^G_K \ : \ H_m(G,V)\ \to \ H_{m-k}(K,V_K) \]
is the composite
\[ \tilde H^G_m(S^V)\ \xra{\res^G_K} \
  \tilde H^K_m(S^V)\ \iso \
  \tilde H^K_m(S^{V_K}\sm S^k)\ \xra[\iso]{(-\sm S^k)^{-1}} \
  \tilde H^K_{m-k}(S^{V_K})\ . \]
The first isomorphism is induced by a choice of $K$-equivariant
isomorphism $V\iso V_K\oplus\mR^k$; it is independent of this choice by
Proposition \ref{prop:vanish high degrees} (iii).
The second isomorphism is the inverse of the suspension isomorphism.
The restriction homomorphism is multiplicative, and its effect
on inverse Thom and pre-Euler classes is given by
\[ \res^G_K(t_V)\ = \ t_{V_K} \text{\quad and\quad}
 \res^G_K(a_V)\ = \
  \begin{cases}
    a_{V_K} & \text{ if $V^K=0$, and}\\
    \ 0 & \text{ if $V^K\ne 0$.}
  \end{cases}
\]

\begin{prop}\label{prop:fundamental}
  Let $A$ be an elementary abelian 2-group and let $W$ be an $A$-representation
  with trivial fixed points.
  \begin{enumerate}[\em (i)]
\item For every subgroup $B$ of $A$, the restriction homomorphism
  \[ \res^A_B\ : \ H_m(A,W)\ \to \ H_{m-k}(B,W_B) \]
  is surjective, where $k=\dim(W^B)$.
\item 
  Suppose that $W=V\oplus\lambda$ for an $A$-representation $V$ and
  a nontrivial $A$-character $\lambda$ with kernel $K$.
  Then for $k=\dim(V^K)$, the following sequence is exact: 
  \[ 0 \ \to \ H_m(A,V)\ \xra{\ \cdot a_\lambda} \ H_m(A,V\oplus\lambda)\ \xra{\res^A_K} \
    H_{m-k-1}(K,V_K)\ \to\ 0  \]

  \item The  $\mF_2$-vector space $H_m(A,W)$ is spanned by the classes $a_U\cdot t_V$
  for all $A$-representations $U$ and $V$ such that $U\oplus V= W$
  and $m=\dim(V)$.
\end{enumerate}
\end{prop}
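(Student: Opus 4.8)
The plan is to prove all three statements simultaneously by induction on the dimension of $W$, using the exact sequence in (ii) as the inductive engine, and deriving (i) and (iii) as consequences along the way. The base case $W=0$ is trivial: $H_m(A,0)=\tilde H_m^A(S^0)$ is $\mF_2$ in degree $0$ and zero otherwise, spanned by $1=a_0\cdot t_0$, and all restriction maps are the identity. For the inductive step, since $A$ is elementary abelian and $W$ has trivial fixed points, I can write $W=V\oplus\lambda$ for some nontrivial character $\lambda$ (with kernel $K$) and some $A$-representation $V$ with $V^A=0$ but $\dim V=\dim W-1$.

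First I would establish part (ii). The long exact Bredon homology sequence associated to the cofiber sequence $S^0\to S^\lambda\to S^\lambda/S^0$, smashed with $S^V$, together with the identification of $S^\lambda/S^0$ with a suspension of the based $A$-space $A/K_+\sm S^1$ (this is exactly the cell structure of $S^\lambda$ used in the Example before Proposition~\ref{prop:x-relation}), gives
\[
\cdots\to H_{m+1}(A,V\oplus\lambda)\xrightarrow{\ \partial\ } \tilde H^A_{m}(A/K_+\sm S^V)\to H_m(A,V)\xrightarrow{\cdot a_\lambda} H_m(A,V\oplus\lambda)\to\cdots
\]
Here the map into $H_m(A,V)$ is the one induced by the inclusion $S^0\hookrightarrow S^\lambda$, which is multiplication by $a_\lambda$ by definition of the pre-Euler class. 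The term $\tilde H^A_{m}(A/K_+\sm S^V)$ is, by the Wirthm\"uller/restriction identification for the induced space $A/K_+$, isomorphic to $\tilde H^K_m(S^V)$; desuspending the $V^K$-fixed directions identifies this with $H_{m-k}(K,V_K)$ where $k=\dim(V^K)=\dim(V^K)$, and one checks the composite $H_m(A,V\oplus\lambda)\to\tilde H^A_m(A/K_+\sm S^V)\cong H_{m-k-1}(K,V_K)$ is exactly $\res^A_K$ (note $\dim((V\oplus\lambda)^K)=\dim(V^K)+1=k+1$ since $\lambda|_K$ is trivial). To get the short exact sequence claimed, I need the connecting map $\partial$ to vanish; equivalently, $\cdot a_\lambda$ is injective. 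This follows from the inductive hypothesis: by part (iii) for $V$, $H_m(A,V)$ is spanned by classes $a_U t_{V'}$ with $U\oplus V'=V$, and multiplication by $a_\lambda$ sends such a class to $a_{U\oplus\lambda}t_{V'}$; injectivity will come from a weight/restriction argument — restricting to a subgroup on which $\lambda$ is nontrivial but on which one can detect $t_{V'}$ — or, more cleanly, by a dimension count using the Poincar\'e series, comparing $\dim H_m(A,V)+\dim H_{m-k-1}(K,V_K)$ with $\dim H_m(A,V\oplus\lambda)$. I expect this injectivity of $\cdot a_\lambda$ to be the main obstacle, and the natural fix is to prove it together with (iii) by a joint induction, using that $a_\lambda$ becomes invertible after a suitable localization detected by restriction to the rank-one subgroups not containing $K$.

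With (ii) in hand, part (iii) is immediate by induction: the short exact sequence expresses $H_m(A,V\oplus\lambda)$ as an extension of $H_{m-k-1}(K,V_K)$ by $a_\lambda\cdot H_m(A,V)$; the latter is spanned by $a_\lambda\cdot a_U t_{V'}=a_{U\oplus\lambda}t_{V'}$ with $U\oplus V'=V$ (so $(U\oplus\lambda)\oplus V'=W$, $\dim V'=m$), and the former is spanned, by the inductive hypothesis for the smaller group $K$ and representation $V_K$, by classes $a_{\bar U}t_{\bar V}$ with $\bar U\oplus\bar V=V_K$ and $\dim\bar V=m-k-1$; lifting these along the surjection $\res^A_K$ and multiplying by $t_\lambda\cdot t_{V^K}$ (using $\res^A_K(t_{V^K\oplus\lambda})=t_\emptyset$ appropriately, i.e.\ that the lift can be taken of the form $a_{\bar U}t_{\bar V\oplus V^K\oplus\lambda}$) produces classes of the form $a_U t_{V'}$ with $U\oplus V'=W$ and $\dim V'=m$. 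Finally, part (i): $\res^A_B$ is surjective because any subgroup $B$ is contained in a chain of index-two subgroups, so it suffices to treat the case $[A:B]=2$, i.e.\ $B=K=\ker\lambda$ for some character $\lambda$ with $\lambda|_{\text{rest}}$; then surjectivity of $\res^A_K$ onto $H_{m-k-1}(K,W_K)$ is literally the right-hand map in the exact sequence of (ii) (after writing $W=V\oplus\lambda$ with $W_K=V_K$), and the general case follows by composing such restrictions and invoking (iii) to see the generating classes $a_U t_V$ restrict to generating classes.

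One cautionary point: to run the induction cleanly one should order the induction by $\dim W$ and allow the group to shrink (from $A$ to $K$) in the $H_{m-k-1}(K,V_K)$ term, so the inductive hypothesis must be phrased for all elementary abelian $2$-groups at once. The identification $\tilde H^A_m(A/K_+\sm S^V)\cong H_{m-\dim V^K}(K,V_K)$ is the standard change-of-groups isomorphism $\tilde H^A_*(A/K_+\sm Y)\cong\tilde H^K_*(Y)$ for Bredon homology, valid since $\underline{\mF_2}$ is a Mackey functor, and desuspension of the $V^K$ directions is Proposition~\ref{prop:vanish high degrees}(i)–(iii); these are the only inputs beyond the long exact sequence and the inductive hypothesis.
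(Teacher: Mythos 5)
Your setup is the same as the paper's -- the cofiber sequence $A/K_+\to S^0\to S^\lambda$ smashed with $S^V$, the Wirthm\"uller/suspension identification of the third term with $H_{m-k-1}(K,V_K)$, the identification of the boundary map with $\res^A_K$, and the derivation of (iii) from (i) and (ii) -- but the heart of (ii) is left open. You correctly isolate the missing step (the long exact sequence only breaks into the claimed short exact sequences if $\cdot\, a_\lambda$ is injective, equivalently if the boundary map is surjective, in every degree), yet none of the three routes you sketch closes it: the Poincar\'e-series dimension count is circular, since the Poincar\'e series of $H(A,\star)$ is not available independently of exactly this structure theory; the ``weight/restriction argument'' is not specified; and the localization inverting $a_\lambda$ is not constructed, nor is it explained why restriction to rank-one subgroups would detect the kernel. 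As written, this is a genuine gap, and it is the one substantive point of the whole proposition.

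The paper closes it by organizing the induction differently: it inducts on the rank of $A$ (not on $\dim W$) and proves (i) \emph{first}, before (ii). For a proper subgroup $B$, part (iii) for $B$ is available by the rank induction, so it suffices to lift the spanning classes $a_U\cdot t_V$ of $H_{m-k}(B,W_B)$; since $W$ is a sum of $A$-characters, one chooses an $A$-equivariant decomposition $W=\bar U\oplus\bar V\oplus T$ with $\res^A_B(\bar U)\iso U$, $\res^A_B(\bar V)\iso V$ and $B$ acting trivially on $T$, and then $\res^A_B(a_{\bar U}\cdot t_{\bar V}\cdot t_T)=a_U\cdot t_V$. With (i) in hand, the boundary map in your long exact sequence, being $\res^A_K$, is surjective in \emph{all} degrees, so the sequence decomposes into short exact sequences and the injectivity of $\cdot\,a_\lambda$ is a consequence rather than an input. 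Your ordering also creates secondary problems that this reorganization avoids: your reduction of (i) to index-two subgroups invokes (ii) for $W=V\oplus\lambda$, which requires the character $\lambda$ cutting out $K$ to actually occur in $W$; and when $W^K=0$ the dimension does not drop upon restriction, so an induction on $\dim W$ alone never reaches (iii) for $(K,W_K)$. Both symptoms point to the same fix: make the outer induction run over the rank of the group, with the induction on $\dim W$ only used inside the proof of (iii).
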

\begin{proof}
  We prove all three statements together by induction over the rank of $A$.
  The induction starts when $A$ is the trivial group, in which case there is nothing to show.
  Now we let $A$ be a nontrivial elementary abelian 2-group, and we assume
  that parts (i)--(iii) hold for all proper subgroups of $A$.

  We start by proving (i), where we may assume that $B$ is a proper subgroup of $A$.
  By part (iii) for $B$, it suffices to show that all classes of the form
  $a_U\cdot t_V$ are in the image of the restriction homomorphism,
  whenever $U$ and $V$ are $B$-representations such that $U\oplus V=W_B$
  and $m-k=\dim(V)$. Because $W$ is a sum of 1-dimensional $A$-representations,
  we may choose an $A$-equivariant decomposition $W=\bar U\oplus\bar V\oplus T$
  such that  $\res^A_B(\bar U)\iso U$, $\res^A_B(\bar V)\iso V$,
  and $B$ acts trivially on $T$.
  Then $\res^A_B(a_{\bar U}\cdot t_{\bar V}\cdot t_T)= a_U\cdot t_V$,
  and we have shown part (i) for $A$.

  Now we prove (ii).
  Smashing the cofiber sequence of based $A$-spaces
  \[ A/K_+ \ \xra{\quad} \ S^0 \ \xra{\quad }\ S^\lambda \ \to\ A/K_+\sm S^1 \]
  with $S^V$ and applying $A$-equivariant Bredon homology yields a long exact sequence: 
  \[ 
       \dots\ \to \  H_m(A,V) \  \xra{\ \cdot a_\lambda}\ H_m(A,V\oplus\lambda)\
     \xra{\ \partial\ }\  \tilde H_{m-1}^A(S^V\sm A/K_+)\ \to\  \dots  
 \]
  The Wirthm{\"u}ller and suspension isomorphisms identify the group $\tilde H_{m-1}^A(S^V\sm A/K_+)$
  with $\tilde H_{m-k-1}^K(S^{V_K})=H_{m-k-1}(K,V_K)$.
  Under this identification, the boundary map $\partial$ becomes the
  restriction homomorphism $\res^A_K:H_m(A,V\oplus\lambda)\to H_{m-k-1}(K,V_K)$,
  which is surjective by (i).
  So the long exact sequence decomposes into short exact sequences, showing (ii).

  We prove (iii) by induction on the dimension of $W$.
  For $W=0$, the groups $H_*(A,0)$ consist of a single copy of $\mF_2$
  in dimension 0, spanned by the multiplicative unit $1=t_0$.
  If $W$ is nonzero, we write $W=V\oplus\lambda$
  for an $A$-representation $V$ and a nontrivial $A$-character $\lambda$, with kernel $K$.
  By part (i), the restriction map  $\res^A_K : H_{m-1}(A,V)\to  H_{m-k-1}(K,V_K)$ is surjective.
  So for every class $x\in H_m(A,W)$,
  there is a class $z\in H_{m-1}(A,V)$ such that $\res^A_K(z)=\res^A_K(x)$. Then
  \[ \res^A_K(x + z\cdot t_\lambda)\ = \ \res^A_K(x) + \res^A_K(z)\ = \ 0 \ .\]
  Part (ii) provides a class $y\in H_m(A,V)$ such that  $y\cdot a_\lambda= x + z\cdot t_\lambda$.
  Because the dimension of $V$ is smaller than that of $W$,
  the classes $y$ and $z$ are sums of products of $a$-classes and $t$-classes.
  Hence the same is true for $x=y\cdot a_\lambda+ z\cdot t_\lambda$,
  and we have shown (iii). 
\end{proof}

Our next result shows that the rings $H(A,\star)$ have no zero-divisors.
As we explain in Remark \ref{rk:domain} below, this can also be proven
by combining results of \cite{holler-kriz} and \cite{skriz},
so we make no claim to originality.
As a service to the reader, we record this key structural result explicitly and
give an independent, self-contained and elementary proof.

\begin{theorem}\label{thm:domain}
  For every elementary abelian 2-group $A$, the representation-graded Bredon homology
  ring $H(A,\star)$ is a domain.
\end{theorem}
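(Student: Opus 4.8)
The plan is to prove the no-zero-divisors property by induction on the rank of $A$, exploiting the exact sequence of Proposition~\ref{prop:fundamental}(ii) as the inductive engine. The base case is $A$ trivial (or of rank one), where $H(A,\star)$ is the polynomial ring $\mF_2[a,t]$ on one pre-Euler and one inverse Thom class, which is obviously a domain. For the inductive step, suppose $A$ has rank $r\geq 1$ and that $H(B,\star)$ is a domain for every proper subgroup $B\leq A$. Pick a nontrivial character $\lambda$ of $A$ with kernel $K$ (so $K$ has rank $r-1$). The idea is to show that the subring of $H(A,\star)$ generated by all classes \emph{not} involving $\lambda$, together with $a_\lambda$ and $t_\lambda$, behaves like a well-controlled extension of $H(K,\star)$, and then to chase zero-divisors through the exact sequences.

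The key structural input I would isolate is this: for each $A$-representation $V$ with $V^A=0$, Proposition~\ref{prop:fundamental}(ii) gives a short exact sequence
\[ 0 \to H_m(A,V) \xrightarrow{\ \cdot a_\lambda\ } H_m(A,V\oplus\lambda) \xrightarrow{\ \res^A_K\ } H_{m-k-1}(K,V_K) \to 0, \]
so multiplication by $a_\lambda$ is \emph{injective} on $H(A,\star)$, and the quotient by the principal ideal $(a_\lambda)$ is identified, via $\res^A_K$, with a graded piece of $H(K,\star)$. Iterating the first fact, multiplication by any monomial in the $a$-classes is injective; in particular $a_\lambda$ is a nonzerodivisor. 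The strategy is then to show that $H(A,\star)/(a_\lambda)$ is a domain and that no zero-divisor can escape. Concretely, suppose $x\cdot y = 0$ with $x,y$ nonzero homogeneous. Write $x = a_\lambda^i x'$ and $y = a_\lambda^j y'$ with $a_\lambda\nmid x'$, $a_\lambda\nmid y'$ (possible since $a_\lambda$ is a nonzerodivisor, so such a factorization exists and is unique up to the grading); then $a_\lambda^{i+j} x' y' = 0$ forces $x'y'=0$. Now reduce mod $a_\lambda$: the images $\bar{x}', \bar{y}'$ in $H(A,\star)/(a_\lambda)$ are nonzero, and their product is zero. Since $\res^A_K$ identifies $H(A,\star)/(a_\lambda)$ with the subring of $H(K,\star)$ generated by the restrictions $\res^A_K(a_\mu)=a_{\mu_K}$ and $\res^A_K(t_\mu)=t_{\mu_K}$ of all characters $\mu\neq\lambda$ (using the restriction formulas recalled before the proposition, noting $\res^A_K(a_\mu)=0$ exactly when $\mu$ restricts trivially to $K$, i.e.\ $\mu=\lambda$), and $H(K,\star)$ is a domain by induction, we get a contradiction provided the map $H(A,\star)/(a_\lambda)\to H(K,\star)$ is injective — which it is, being the cokernel identification in the exact sequence above, collated over all $V$.

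The main obstacle, and where the argument needs genuine care rather than formal nonsense, is the bookkeeping of gradings and the claim that $\res^A_K$ assembles into an injective \emph{ring} map on $H(A,\star)/(a_\lambda)$ with image a sub\emph{ring} of $H(K,\star)$: the target grading group $J_K$ is a quotient-like modification of $J_A$ (characters that become trivial on $K$ collapse to copies of the trivial representation, which get absorbed via suspension), so one must check that the multiplicative structure is respected under this reindexing and that distinct $A$-gradings mapping to the same $K$-grading don't interfere. This is exactly the content of the restriction homomorphism being multiplicative, stated in the paragraph preceding Proposition~\ref{prop:fundamental}, but one has to verify that the exact sequences of part~(ii), as $V$ ranges over all representations, are compatible with products so that the quotient $H(A,\star)/(a_\lambda)$ inherits a ring structure for which $\res^A_K$ is a ring homomorphism. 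Once that compatibility is pinned down, the zero-divisor chase above closes the induction. I would also double-check the edge case where a purported zero-divisor $x$ satisfies $a_\lambda\mid x$ to all orders — impossible by the grading, since the $a$-degree is bounded in each fixed multidegree — so the factorization $x=a_\lambda^i x'$ with $a_\lambda\nmid x'$ genuinely terminates.
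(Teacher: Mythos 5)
Your inductive step rests on the claim that $\res^A_K$ induces an injection $H(A,\star)/(a_\lambda)\hookrightarrow H(K,\star)$, and this claim is false; consequently the final contradiction does not go through. Proposition \ref{prop:fundamental}(ii) identifies $\ker(\res^A_K)$ with the image of multiplication by $a_\lambda$ only in multidegrees $(m,W)$ in which $\lambda$ actually occurs in $W$ (its hypothesis is $W=V\oplus\lambda$). In multidegrees not containing $\lambda$ the ideal $(a_\lambda)$ meets the graded piece trivially (the grading monoid $J_A$ is free, so every homogeneous multiple of $a_\lambda$ has $\lambda$ in its degree), while $\res^A_K$ can still kill nonzero classes. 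Concretely, take $A=C^2$, $\lambda=p_1$, $K=\ker p_1$, $\mu=p_1p_2$, and $z=a_2t_\mu+t_2a_\mu\in H_1(A,p_2\oplus\mu)$. Then $z\neq 0$: restricting to $\ker\mu$ sends $a_2t_\mu$ to the nonzero class $a_{p_2|_{\ker\mu}}$ and $t_2a_\mu$ to $0$. Also $z$ is not divisible by $a_{p_1}$, since its degree does not contain $p_1$. Yet $\res^A_K(z)=a_\sigma t_\sigma+t_\sigma a_\sigma=0$, where $\sigma$ is the nontrivial $K$-character. So from $a_\lambda\nmid x'$ you cannot conclude that $x'$ has nonzero image in $H(K,\star)$. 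Worse, the quotient you want to be a domain is not one: by Proposition \ref{prop:x-relation}(ii), $t_1\cdot z=t_1a_2t_\mu+t_1t_2a_\mu=a_1t_2t_\mu\in(a_1)$, while $\bar t_1\neq 0$ and $\bar z\neq 0$ in $H(C^2,\star)/(a_1)$.

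This failure is precisely what the paper's proof is organized around, and it cannot be patched by fixing one $\lambda$ at the outset. The paper proves regularity of all nonzero homogeneous elements by nested inductions: first the $a_\lambda$ are regular (your Proposition \ref{prop:fundamental}(ii) step, which is fine), then the $t_\mu$, then a general $x\in H_m(A,V)$ by induction on $\dim(V)-m$, with a case distinction. If $\res^A_K(x)=0$ for some index-$2$ subgroup $K=\ker\lambda$, one writes $x\cdot t_\lambda=u\cdot a_\lambda$ and inducts on the codegree of $u$; if $\res^A_K(x)\neq 0$ for all such $K$, then given $x\cdot y=0$ one chooses $\lambda$ \emph{occurring in the degree $W$ of $y$} (not fixed in advance), so that $\res^A_K(y)=0$ really does force $y\in(a_\lambda)$ by Proposition \ref{prop:fundamental}(ii), and one inducts on $\dim W$. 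Some device of this kind, giving control over classes killed by $\res^A_K$ in degrees not containing $\lambda$, is what your argument is missing.
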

\begin{proof}
  We call an element of $H(A,\star)$ {\em regular} if multiplication by it is injective.
  We will show that all nonzero homogeneous elements of $H(A,\star)$ are regular.
  We argue by induction over the rank of $A$.
  For $A=\{1\}$, the ring is the field $\mF_2$, hence a domain.

  Now we suppose that $A$ is nontrivial.
  Proposition \ref{prop:fundamental} (ii) shows that the pre-Euler classes
  of all nontrivial $A$-characters are regular.
  We show next that all the inverse Thom classes $t_\mu$ are regular.
  So we let $y\in H_n(A,W)$ be a homogeneous element such that $t_\mu\cdot y=0$,
  for some $A$-representation $W$ with trivial fixed points.
  We argue by induction on the dimension of $W$.
  If $W=0$ there is nothing to show because the integer-graded part of Bredon homology
  is spanned by the multiplicative unit, and $t_\mu\ne 0$.
  For $W\ne 0$ we write $W=U\oplus\lambda$ for some nontrivial $A$-character $\lambda$,
  with kernel $K$.
  Then 
  \[  \res^A_K(t_\mu)\cdot \res^A_K(y) \  =\ \res^A_K(t_\mu\cdot y)\ =\ 0\ .\]
  Because $\res^A_K(t_\mu)$ is either 1 (if $\lambda=\mu$),
  or the inverse Thom class of the restricted character $\mu|_K$
  (if $\lambda\ne\mu$), and because $K$ has smaller rank than $A$,
  the class $\res^A_K(t_\mu)$ is nonzero and regular, so $\res^A_K(y)=0$.
  Proposition \ref{prop:fundamental} (ii)
  provides a class $u\in H_n(A,U)$ such that $y= u\cdot a_\lambda$.
  Then $t_\mu\cdot u\cdot a_\lambda=t_\mu\cdot y=0$, so
  $t_\mu\cdot u=0$  because $a_\lambda$ is regular.
  Because $U$ has smaller dimension than $W$,
  we deduce that $u=0$.  Hence also $y=0$, and this concludes the special case.

  Now we show that a general nonzero homogeneous element $x\in H_m(A,V)$ is regular,
  where $V$ is an $A$-representation with trivial fixed points.
  The assumption that $x$ is nonzero implies that $m\leq \dim(V)$.
  We argue by induction on $\dim(V)-m$.
  If $m=\dim(V)$, then $x=t_V$, and so $x$ is a product of inverse Thom classes
  of $A$-characters. Since all the factors are regular by the special case, so is $t_V$.

  Now we suppose that $m<\dim(V)$. We distinguish two cases.
  In the first case we suppose that there is an index 2 subgroup $K$ of $A$
  such that $\res^A_K(x)=0$. We let $\lambda$ be the $A$-character whose kernel is $K$.
  Then also $\res^A_K(x\cdot t_\lambda)=0$.
  Proposition \ref{prop:fundamental} (ii)
  provides a class $u\in H_{m+1}(A,V)$ such that $u\cdot a_\lambda=x\cdot t_\lambda$.
  Since $x\ne 0$ and $t_\lambda$ is regular, the class $u$ is nonzero.
  Because the integer degree of $u$ is larger than that of $x$, 
  the induction hypothesis shows that $u$ is regular. Since $u$, $a_\lambda$ and $t_\lambda$
  are regular, so is $x$.

  In the second case we suppose that for every index 2 subgroup $K$ of $A$,
  the restriction $\res^A_K(x)$ is nonzero.
  We consider a homogeneous element $y\in H_n(A,W)$ such that $x\cdot y=0$,
  where $W$ is another $A$-representation with trivial fixed points.
  We perform another induction on the dimension of $W$.
  If $W=0$, then $y$ lies in the integer-graded subring, so $y=0$ or $y=1$, and we are done.
  For $W\ne 0$ we write $W=U\oplus\lambda$ for some nontrivial $A$-character $\lambda$,
  with kernel $K$. Then
  \[ \res^A_K(x)\cdot \res^A_K(y)\ = \ \res^A_K(x\cdot y)\ = \ 0\ . \]
  Because $\res^A_K(x)\ne 0$ and $H(K,\ast)$ is a domain by induction,
  this implies $\res^A_K(y)=0$.
  Proposition \ref{prop:fundamental} (ii) provides
  a class $u\in H_n(A,U)$ such that $y=u\cdot a_\lambda$.
  Thus $x\cdot u\cdot a_\lambda=x\cdot y=0$.
  So $x\cdot u=0$ because $a_\lambda$ is regular.
  Because the dimension of $U$ is smaller than the dimension of $W$,
  we conclude that $u=0$, and hence also $y=0$.
\end{proof}

Now we move on to our main result, the minimal presentation
of the representation-graded Bredon homology ring $H(A,\star)$.
Proposition \ref{prop:fundamental} (iii) shows that $H(A,\star)$
is generated as an $\mF_2$-algebra
by the classes $a_\lambda$ and $t_\lambda$ for all nontrivial $A$-characters $\lambda$.
If $A=C$, there is only one nontrivial character, and
then $H(C,\star)$ is well known to be a polynomial algebra
on the classes $a$ and $t$.
If $A$ is elementary abelian of rank at least 2,
however, there are nontrivial polynomial relations between the
pre-Euler classes and the inverse Thom classes.

In the following, it will be convenient to use the notation
\[ A^\circ \ = \ \Hom(A,C)\setminus\{1\} \]
for the set of nontrivial characters of an elementary abelian 2-group.

\begin{defn}[Minimally dependent sets of characters]
  Let $A$ be an elementary abelian 2-group.
  A nonempty subset of $A^\circ$ is {\em dependent}
  if it is linearly dependent as a subset of the vector space $\Hom(A,C)$.
  The set is {\em minimally dependent} if it is dependent, but no proper subset is dependent.
\end{defn}

\begin{rk}\label{rk:cardinality_minimal}
If $T$ is a minimally dependent subset of $A^\circ$,
then the product of all its elements must be the trivial character.
Indeed, the linear dependence of $T$ means that
some subset of it has product the trivial character;
if this were the case for some proper subset of $T$,
then that subset would be dependent, contradicting minimality.

Sets of nontrivial $A$-characters that have one or two elements
have a nontrivial product.
So every dependent set of $A$-characters has a least three elements.
If $A$ has rank $r$, then every set of $r+1$ nontrivial $A$-characters is dependent,
and hence has a nonempty subset whose product is the trivial character.
So a minimally dependent set of $A$-characters has at most $r+1$ elements.

The group $C^2$ has precisely one minimally dependent set of nontrivial characters,
the set of all three nontrivial characters.
In Example \ref{eg:rank3} we enumerate all 14 minimally dependent sets of nontrivial characters
of the group $C^3$.
In Remark \ref{rk:count_relations} we determine the number of minimally dependent subsets
of $A^\circ$ as a function of the rank of $A$.
\end{rk}

The next theorem is our main result about representation-graded Bredon homology,
providing an explicit set of homogeneous polynomial relations between the
pre-Euler and inverse Thom classes,
parameterized by minimally dependent sets of characters.
Moreover, these relations form a minimal generating set for the ideal of all relations.
In \cite[Theorem 3.5]{hausmann-schwede:bordism}, we use these presentations of
$H(A,\star)$ to establish a `global' universal property of the collection of representation-graded
Bredon homology rings: we show that mod-2 Bredon homology is an initial additively
oriented $\textrm{el}^{R O}_2$-algebra.
Holler and Kriz determined the Poincar{\'e} series of the multigraded ring
$H(A,\star)$ in \cite[Theorem 5]{holler-kriz}.
We have not investigated how to derive their formula for the 
Poincar{\'e} series from our presentation.

\begin{theorem}\label{thm:Bredon_RO-graded}
  Let $A$ be an elementary abelian 2-group.
  \begin{enumerate}[\em (i)]
  \item
    The representation-graded Bredon homology ring $H(A,\star)$
    is generated as an $\mF_2$-algebra
    by the classes $a_\lambda$ and $t_\lambda$ for all nontrivial $A$-characters $\lambda$.
\item 
  The kernel of the surjective homomorphism of $\mF_2$-algebras
  \[ \epsilon_A \ : \  \mF_2[a_\mu,t_\mu\colon \mu\in A^\circ] \ \to \ H(A,\star) \]
  is the ideal generated by the polynomials
  \[  r(T)\ = \
    \sum_{\lambda\in T}  a_\lambda \cdot\bigg( \prod_{\mu\in T\setminus\{\lambda\}}\, t_\mu\bigg) \]
  for all minimally dependent subsets $T$ of $A^\circ$.
\item
  Every set of homogeneous elements that generates the kernel
  of $\epsilon_A$ contains the polynomials $r(T)$
 for all minimally dependent subsets $T$ of $A^\circ$.
\end{enumerate}
\end{theorem}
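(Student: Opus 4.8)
Part (i) is exactly Proposition \ref{prop:fundamental}(iii). For part (ii), Proposition \ref{prop:x-relation}(ii) combined with Remark \ref{rk:cardinality_minimal} shows that every $r(T)$ lies in $\ker\epsilon_A$, so $\epsilon_A$ factors through a surjection of $(\mN\times J_A)$-graded $\mF_2$-algebras $\bar\epsilon_A\colon Q_A\to H(A,\star)$, where $Q_A$ is the quotient of $\mF_2[a_\mu,t_\mu\colon\mu\in A^\circ]$ by the ideal $I_A$ generated by the $r(T)$ for minimally dependent $T$; the task is to prove $\bar\epsilon_A$ injective. I would begin with the purely combinatorial observation that $r(T)\in I_A$ for \emph{every} set $T$ of nontrivial characters with $\prod_{\mu\in T}\mu=1$, not only the minimally dependent ones. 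This follows by induction on $|T|$ from the identity
\[ r(T_0\sqcup T_1)\ =\ r(T_0)\cdot\prod_{\mu\in T_1}t_\mu\ +\ \prod_{\mu\in T_0}t_\mu\cdot r(T_1), \]
applied with $T_0\subseteq T$ a minimally dependent subset and $T_1=T\setminus T_0$, whose product is again trivial (so $|T_1|\ge 3$ when $T$ is not itself minimally dependent).

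Next I would prove $\bar\epsilon_A$ injective by induction on the rank of $A$ (ranks $0$ and $1$ being immediate). Fix a multidegree $(m,[W])$ with $W\ne0$, choose a character $\lambda$ occurring in $W$, put $K=\ker\lambda$, and write $W=V\oplus\lambda$ with $c=\dim W^K$. The idea is to lift the exact sequence of Proposition \ref{prop:fundamental}(ii) to the polynomial level. The $\mF_2$-algebra map $\rho_\lambda\colon\mF_2[a_\mu,t_\mu]\to\mF_2[a_\sigma,t_\sigma\colon\sigma\in K^\circ]$ determined by $a_\lambda\mapsto0$, $t_\lambda\mapsto1$, $a_\mu\mapsto a_{\mu|_K}$, $t_\mu\mapsto t_{\mu|_K}$ (for $\mu\ne\lambda$) satisfies $\res^A_K\circ\epsilon_A=\epsilon_K\circ\rho_\lambda$; using the combinatorial fact for $K$ together with the pairwise cancellations in characteristic $2$ that occur when a minimally dependent set contains both characters of a pair $\{\mu,\mu\lambda\}$, one checks $\rho_\lambda(I_A)\subseteq I_K$, so $\rho_\lambda$ descends to $\bar\rho_\lambda\colon(Q_A)_{(m,W)}\to(Q_K)_{(m-c,W_K)}$. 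Granting that $\bar\epsilon_K$ is an isomorphism (rank induction), that $\bar\epsilon_A$ is an isomorphism in degree $(m,V)$ (a secondary induction on $\dim W$, using $\dim V<\dim W$), that $\bar\epsilon_A$ is surjective, and that $a_\lambda$ acts injectively on $H(A,\star)$ (Proposition \ref{prop:fundamental}(ii)), a diagram chase against the sequence of Proposition \ref{prop:fundamental}(ii) reduces the injectivity of $\bar\epsilon_A$ in degree $(m,W)$ to the following \emph{Key Lemma}: if $p$ is homogeneous of degree $(m,W)$ and $\rho_\lambda(p)\in I_K$, then $p\in I_A+a_\lambda\cdot\mF_2[a_\mu,t_\mu]$.

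This Key Lemma is the main obstacle, and I would prove it in two steps. First, expressing $\rho_\lambda(p)=\sum_j c_j\,r(S_j)$ with each $S_j$ minimally dependent among the characters occurring in $W_K$, I lift each $S_j$ to a set $\widehat S_j$ of characters of $A$ restricting bijectively onto $S_j$: either $\prod\widehat S_j=1$, whence $\rho_\lambda(r(\widehat S_j))=r(S_j)$, or $\prod\widehat S_j=\lambda$, whence $\rho_\lambda(r(\widehat S_j\cup\{\lambda\}))=r(S_j)$; in both cases the lifted relation lies in $I_A$ by the combinatorial fact, so subtracting a suitable homogeneous combination of these lifts (with coefficients lifting the $c_j$) reduces to the case $\rho_\lambda(p)=0$. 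Second, for $p$ homogeneous with $\rho_\lambda(p)=0$, I write $p=\sum_{i=0}^{c}a_\lambda^{i}t_\lambda^{c-i}y_i$ with the $y_i$ free of $\lambda$; then $\rho_\lambda(p)=\rho_\lambda(y_0)=0$, and since the kernel of $\rho_\lambda$ on $\lambda$-free polynomials of a fixed multidegree is spanned by monomial multiples of the binomials $a_\mu t_{\mu\lambda}+a_{\mu\lambda}t_\mu$ over pairs $\{\mu,\mu\lambda\}$ of characters occurring in $V_K$, and since $t_\lambda\cdot(a_\mu t_{\mu\lambda}+a_{\mu\lambda}t_\mu)=r(\{\lambda,\mu,\mu\lambda\})+a_\lambda t_\mu t_{\mu\lambda}$ with $c\ge1$, it follows that $t_\lambda^{c}y_0$, and hence $p$, lies in $I_A+a_\lambda\cdot\mF_2[a_\mu,t_\mu]$. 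The delicate points are the lifting of $K$-relations to $A$-relations and the description of $\ker\rho_\lambda$; the remaining bookkeeping, keeping multidegrees straight throughout, is routine but needs care.

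For part (iii), note first that distinct minimally dependent sets $T$ yield relations $r(T)$ in distinct multidegrees $\big(|T|-1,\bigoplus_{\mu\in T}\mu\big)$, because a multiplicity-free representation determines the set of characters it involves. Fix a minimally dependent $T$ and work in its multidegree. Since no proper subset of $T$ is dependent, the only minimally dependent $S$ with $\bigoplus_{\mu\in S}\mu\le\bigoplus_{\mu\in T}\mu$ is $T$ itself, and for each $\nu\in T$ there is no minimally dependent $S$ with $\bigoplus_{\mu\in S}\mu\le\bigoplus_{\mu\in T\setminus\{\nu\}}\mu$; comparing gradings, this shows that in the multidegree of $r(T)$ the ideal $I_A$ is one-dimensional, spanned by $r(T)$, and that the product of $I_A$ with the irrelevant maximal ideal vanishes in that multidegree. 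A graded Nakayama argument then forces any homogeneous generating set of $\ker\epsilon_A$ to contain $r(T)$, for every minimally dependent $T$.
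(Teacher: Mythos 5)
Your proposal is correct and follows essentially the same route as the paper: a double induction on the rank of $A$ and on $\dim W$, restriction to $K=\ker\lambda$, the identification of the kernel of the variable-identification map as generated by the binomials $a_\mu t_{\mu\lambda}+a_{\mu\lambda}t_\mu$, the lifting of minimally dependent $K$-sets to $A$-sets (adjoining $\lambda$ when the product of the lift is $\lambda$), the identity $t_\lambda(a_\mu t_{\mu\lambda}+a_{\mu\lambda}t_\mu)=r(\{\lambda,\mu,\mu\lambda\})+a_\lambda t_\mu t_{\mu\lambda}$, the $a_\lambda$-regularity supplied by Proposition \ref{prop:fundamental}(ii), and for part (iii) the same per-degree uniqueness argument, merely rephrased as graded Nakayama. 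The only repackaging is that your single Key Lemma for the map $\rho_\lambda$ with $t_\lambda\mapsto 1$, supported by the auxiliary fact that $r(T)\in I_A$ for every set $T$ of nontrivial characters with trivial product, plays the role of the paper's ideal $\Jc$, its properties (a) and (b), and the two-case analysis on the multiplicity of $\lambda$ in $W$ --- a difference of organization, not of substance.
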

\begin{proof}
  Part (i) was shown in Proposition \ref{prop:fundamental} (iii),
  and is repeated here for easier reference.

  (ii)
  For the course of the proof we write $I(A)$ for the ideal of the polynomial ring 
  $\mF[a_\mu,t_\mu\colon \mu\in A^\circ]$ generated by the polynomials $r(T)$
  for all minimally dependent subsets $T$ of $A^\circ$. 
  Since minimally dependent sets of characters multiply to 1,
  Proposition \ref{prop:x-relation} (ii) shows that $I(A)\subseteq \ker(\epsilon_A)$;
  so it remains to show the reverse inclusion.

  We argue by induction on the rank of $A$.
  The induction starts with the trivial group, where there is nothing to show.
  Now we let $A$ be a nontrivial elementary abelian 2-group, and we assume part (ii)
  for all proper subgroups of $A$.
  In the inductive step, we shall make use of the polynomials
  \[  r(T)\ = \
    \sum_{\lambda\in T}  a_\lambda \cdot\bigg( \prod_{\mu\in T\setminus\{\lambda\}}\, t_\mu\bigg) \]
  for arbitrary subsets $T$ of $A^\circ$, not necessarily minimally dependent;
  we alert the reader that if the elements of $T$ do not multiply to the trivial character,
  then the polynomial $r(T)$ will {\em not} map to 0 under $\epsilon_A$.
  
  We let $\lambda$ be a nontrivial $A$-character, with kernel $K$.
  We let $\Jc$ denote the homogeneous ideal in the ring
  $\mF_2[a_\mu,t_\mu\colon  \mu\in A^\circ\setminus\{\lambda\}]$ consisting of those
  elements $y$ such that $\res^A_K(\epsilon_A(y))=0$.
  We emphasize that elements of $\Jc$ are polynomials that do not involve the variables
  $a_\lambda$ nor $t_\lambda$.
  We shall prove two properties of this ideal:\smallskip

  (a) The ideal $\Jc$ is generated by the polynomials $r(T)$ for all
  subsets $T$ of $A^\circ\setminus\{\lambda\}$
  such that either $T$ is minimally dependent, or $T\cup\{\lambda\}$
  is minimally dependent.\smallskip

  (b) In the graded ring   $\mF_2[a_\mu,t_\mu\colon  \mu\in A^\circ]$,
  the relation $t_\lambda\cdot \Jc\subset \td{I(A),a_\lambda}$ holds.\smallskip

  Proof of (a). We write $J_{A\backslash\lambda}$ for the submonoid of $J_A$ consisting of
  the classes of $A$-representations $V$ that do not involve the character $\lambda$.
  Equivalently, $J_{A\backslash\lambda}$ contains the $A$-representations
  with trivial $A$-fixed points such that $V^K=0$.
  The restriction homomorphism $\res^A_K:\mN\times J_{A\backslash\lambda}\to\mN\times J_K$
  lets us inflate $(\mN\times J_K)$-graded rings $R$
  to $(\mN\times J_{A\backslash\lambda})$-graded rings $(\res^A_K)^*(R)$ by setting
  \[ (\res^A_K)^*(R)(k,V)\ = \ R(k,V|_K)\ . \]
  We alert the reader that a grading-inflated polynomial algebra
  is no longer a polynomial algebra.
  With this grading convention, a morphism
  of $(\mN\times J_{A\backslash\lambda})$-graded $\mF_2$-algebras
  \[  \rho^A_K\ : \   \mF_2[a_\mu,t_\mu\colon  \mu\in A^\circ\setminus\{\lambda\}] \ \to \
  (\res^A_K)^*\left(  \mF_2[a_\nu,t_\nu\colon  \nu\in K^\circ]  \right) \]
  is given by sending $a_\mu$ to $a_{\mu|K}$ and $t_\mu$ to $t_{\mu|K}$. 
  Moreover, the ideal $\Jc$ is precisely the kernel of the composite homomorphism
  \begin{align*}
    \mF_2[a_\mu,t_\mu\colon  \mu\in A^\circ\setminus\{\lambda\}] \
    &\xra{\ \rho^A_K\ } \
      (\res^A_K)^*\left(  \mF_2[a_\nu,t_\nu\colon  \nu\in K^\circ]  \right)\\
    &\xra{(\res^A_K)^*(\epsilon_K)} (\res^A_K)^* (  H(K,\star) ) \ .
  \end{align*}
  The kernel of $\rho^A_K$ is the ideal generated by the homogeneous polynomials
  \[
    a_\mu t_{\mu\lambda} + t_\mu a_{\mu\lambda}\ = \ r(\{\mu,\mu\lambda\})
  \]
  for all $\mu\in A^\circ\setminus\{\lambda\}$.
  The set $\{\mu,\mu\lambda\}$ is independent
  and $\{\mu,\mu\lambda\}\cup\{\lambda\}$ is minimally dependent.
  So the polynomials $r(\{\mu,\mu\lambda\})$ are among those of the second kind specified in (a).

  By the inductive hypothesis for the subgroup $K$,
  the kernel of $\epsilon_K: \mF_2[a_\nu,t_\nu\colon  \nu\in K^\circ] \to H(K,\star)$
  is generated by the polynomials $r(S)$ for all minimally dependent subsets $S$
  of $K^\circ$. So the kernel of the homomorphism
  \[  (\res^A_K)^*(\epsilon_K)\ : \ 
    (\res^A_K)^*\left(  \mF_2[a_\nu,t_\nu\colon  \nu\in K^\circ]  \right)
    \ \to \  (\res^A_K)^* (  H(K,\star) ) \]
  is the ideal generated by the same polynomials $r(S)$, but each occurring
  multiple times in different degrees, namely for all subsets $T$ of
  $A^\circ\setminus\{\lambda\}$ such that 
  \[ {\bigoplus}_{\mu\in T}\ \mu|_K \ = \ {\bigoplus}_{\nu\in S}\ \nu\ . \]
  This condition means that for each character $\nu\in S$, the set $T$
  contains exactly one of the two extensions of $\nu$ to an $A$-character.
  Because $S$ is a minimally dependent subset of $K^\circ$, the $A$-character
  \[ {\prod}_{\mu\in T}\ \mu \quad \in \ \Hom(A,C)\]
  then restricts to the trivial character on the subgroup $K$.
  Hence this product is either 1 or $\lambda$.
  In the first case, $T$ is a minimally dependent subset of $A^\circ\setminus\{\lambda\}$.
  In the second case, $T$ is an independent subset of $A^\circ\setminus\{\lambda\}$
  such that $T\cup\{\lambda\}$ is minimally dependent.

  We have now shown that some of the polynomials $r(T)$ with $T$ as specified in (a)
  generate the kernel of $\rho^A_K$, and the images of the others under $\rho^A_K$
  generate the kernel of $(\res^A_K)^*(\epsilon_K)$. So all those polynomials
  together generate the kernel of the composite, and hence the ideal $\Jc$.
  This completes the proof of (a).\medskip

    Proof of (b). It suffices to show that the two types of generating polynomial for
  $\Jc$ specified in (a) have the desired property.
  If $T$ is a minimally dependent subset of $A^\circ\setminus\{\lambda\}$,
  then the polynomial $r(T)$ belongs to the ideal $I(A)$.
  Hence also  $t_\lambda\cdot r(T)\in I(A)$.
  If $T$ is a subset of $A^\circ\setminus\{\lambda\}$ such that
  $T\cup\{\lambda\}$ is minimally dependent, then the relation
  \[  t_\lambda \cdot r(T)\
    = \   r(T\cup\{\lambda\}) \ + \ a_\lambda \cdot {\prod}_{\mu\in T}\ t_\mu  \]
  holds in the ring $\mF_2[a_\mu,t_\mu\colon \mu\in A^\circ]$,
  and witnesses that the left hand side lies in the ideal $\td{I(A),a_\lambda}$.
  This completes the proof of (b).\medskip
  
  Now we complete the inductive step, showing that $\ker(\epsilon_A)\subseteq I(A)$.
  We prove this for all homogeneous pieces $H_m(A,W)$,
  where $W$ is an $A$-representation with $W^A=0$,
  by induction on the dimension of $W$.
  For $W=0$ we are considering integer degrees,
  where source and target of $\epsilon_A$ both
  consist only of a copy of $\mF_2$ in degree 0.
  
  Now we suppose that $W\ne 0$. 
  We let $f$ be a homogeneous polynomial in $\mF_2[a_\mu,t_\mu\colon  \mu\in A^\circ]$
  of degree $(k,W)$ with $\epsilon_A(f)=0$.
  We choose an $A$-character $\lambda$ and
  an $A$-representation $V$ with $W=V\oplus\lambda$.
  We let $K$ denote the kernel of $\lambda$.
  We write $f=t_\lambda\cdot y+a_\lambda\cdot z$ for some homogeneous polynomials $y,z$
  of degrees $(k-1,V)$ and $(k,V)$, respectively. Then
  \[
    \res^A_K(\epsilon_A(y))\ = \
    \res^A_K( t_\lambda\epsilon_A(y) + a_\lambda \epsilon_A(z))
    \ = \     \res^A_K( \epsilon_A(f))    \ = \ 0 \ . \]
  
  Case 1: The $A$-character $\lambda$ occurs in $W$ with multiplicity at least 2.
  Then there is an $A$-representation $U$ such that $V=U\oplus \lambda$.
  Because $\res^A_K(\epsilon_A(y))=0$, there is a class $u\in H_{k-1}(A,U)$
  such that $\epsilon_A(y)=a_\lambda\cdot u$.
  Then 
  \[ a_\lambda\cdot (t_\lambda\cdot u+\epsilon_A(z))\ =\
    t_\lambda\cdot\epsilon_A(y)+a_\lambda\cdot\epsilon_A(z)\ = \ \epsilon_A(f)\ =\ 0\ . \]
  Because multiplication by $a_\lambda$ is injective,
  we deduce that $t_\lambda\cdot u=\epsilon_A(z)$.
  We choose a homogeneous polynomial $g$ in $\mF_2[a_\mu,t_\mu\colon\mu\in A^\circ]$
  of degree $(k-1,U)$ such that $\epsilon_A(g)=u$.
  Then $y+a_\lambda g$ and $z+t_\lambda g$ lie in the kernel of $\epsilon_A$.
  Since $U\oplus\lambda=V$ has smaller dimension than $W$,
  the classes $y+a_\lambda g$ and $z+t_\lambda g$
  are in the ideal $I(A)$, by induction. So
  \[  f\ =\ t_\lambda\cdot y+a_\lambda\cdot z\ = \
    t_\lambda\cdot(y+a_\lambda g) +a_\lambda\cdot(z+t_\lambda g)  \]
  also lies in the ideal $I(A)$.\smallskip

  Case 2: The $A$-character $\lambda$ occurs in $W$ with multiplicity 1.
  Then $\lambda$ does not occur in $V$, and so the polynomial $y$
  does not involve the variable $a_\lambda$ nor $t_\lambda$.
  Because $\res^A_K(\epsilon_A(y))=0$, the polynomial $y$ belongs to the ideal $\Jc$.
  By property (b) of the ideal $\Jc$, the class $t_\lambda \cdot y$ then belongs
  to the ideal generated by $I(A)$ and $a_\lambda$.
  Hence there is a homogeneous polynomial $h$ of degree $(k,V)$
  such that $t_\lambda\cdot y$ is congruent to $a_\lambda\cdot h$ modulo the ideal $I(A)$.
  So the polynomial $f$ in the kernel of $\epsilon_A$ satisfies
  \[ f\ = \ t_\lambda\cdot y + a_\lambda\cdot z\ \equiv \ a_\lambda\cdot (h+z)  \text{\qquad modulo $I(A)$.}\]
  Because $I(A)$ is contained in the kernel of $\epsilon_A$, we deduce the relation
  \[ a_\lambda\cdot\epsilon_A(h+z) \ = \  \epsilon_A(a_\lambda\cdot(h+z)) \ = \ \epsilon_A(f) \ = \  0 \ .\]
  Because multiplication by $a_\lambda$ is injective, we conclude that $\epsilon_A(h+z)=0$.
  Since $V$ has smaller dimension than $W=V\oplus \lambda$, the class $h+z$
  lies in the ideal $I(A)$ by induction. So also $a_\lambda(h+z)$, and hence the class $f$,
  lies in the ideal $I(A)$. This finishes the inductive step, and hence the proof of part (ii).

  (iii)
  Source and target of the homomorphism $\epsilon_A$
  are graded by the abelian monoid of isomorphism classes of $A$-representations.
  This abelian monoid is free on the classes of the $A$-characters
  and thus admits a compatible partial order by declaring $V\leq W$
  if $V$ is isomorphic to a direct summand of $W$.
  Hence all products of a homogeneous polynomial $f$ with other homogeneous elements of
  $\mF_2[a_\mu,t_\mu\colon \mu\in A^\circ]$ have degrees greater or equal than that of $f$.
  By (ii), the kernel of $\epsilon_A$ is generated in gradings 
  that contain the sum of a linearly dependent set of characters.
  So the kernel of $\epsilon_A$ is nontrivial only in degrees 
  that contain the sum of a linearly dependent set of characters.
  The generating relations $r(T)$ specified in (ii) lie in degrees
  that are minimal with this property, and they are the unique nontrivial
  elements in the kernel of $\epsilon_A$ in their degrees.
  So each of the relations $r(T)$ specified in (ii) is necessary to generate
  the kernel of $\epsilon_A$.
\end{proof}

We take the time to go through the presentation of
Theorem \ref{thm:Bredon_RO-graded} for elementary abelian 2-groups of rank at most 3.
For the group $C$ with two elements, the representation-graded Bredon homology
ring is well studied, and a polynomial algebra on the classes $a$ and $t$,
\[ H(C,\star)\ = \ \mF_2[a,t]\ . \]
This calculation is originally due to Stong (unpublished)
and reproved by several authors;
the earliest published reference we know of is \cite[Section 2]{lewis:R O_projective}.

The ring $H(C^2,\star)$ was calculated by
Ellis-Bloor \cite[Theorem 4.14]{ellis-bloor}.
The group $C^2$ has precisely one dependent set of nontrivial characters,
the set $\{p_1,p_2,\mu\}$ of all three nontrivial characters,
and this set is  minimally dependent.
So the presentation of $H(C^2,\star)$ given by Theorem 
\ref{thm:Bredon_RO-graded} has only one relation:
the map from $\mF_2[a_1,a_2,a_\mu,t_1,t_2,t_\mu]$
that takes the polynomial generators to the classes with the same names
factors through an isomorphism of $\mF_2$-algebras
\[ \mF_2[a_1,a_2,a_\mu,t_1,t_2,t_\mu] / (a_1 t_2 t_\mu + t_1 a_2 t_\mu + t_1 t_2 a_\mu) \ \iso \ H(C^2,\star)\ .\]

\begin{eg}[Rank 3]\label{eg:rank3}
  As a proof of concept, we make the minimal presentation
  given by Theorem \ref{thm:Bredon_RO-graded} completely explicit for $A=C^3$.
  In this case there are 14 polynomial generators, namely the classes
  $a_\lambda$ and $t_\lambda$ for each of the seven nontrivial characters $\lambda$,
  and there is also a total of 14 minimal relations.
  Minimally dependent subsets for $C^3$ have either three or four elements.
  There are seven minimally dependent subsets of cardinality 3;
  if $p_i$ denotes the projection of $C^3$ to the $i$th factor, then these sets are
    \begin{align*}
    &\{ p_1,\ p_2,\ p_1 p_2\},\quad
    \{ p_1,\ p_3,\ p_1 p_3\}, \quad \{ p_2,\ p_3,\ p_2 p_3\},\quad \{ p_1,\ p_2 p_3,\ p_1 p_2 p_3\},\\
    &    \{ p_2,\ p_1 p_3,\ p_1 p_2 p_3\},\quad
      \{ p_3,\ p_1 p_2,\ p_1 p_2 p_3\} \text{\quad and\quad}  \{ p_1 p_2,\ p_1 p_3,\ p_2 p_3\} \ .
  \end{align*}
  This gives seven minimal relations of the form
  \[  a_{\alpha} t_{\beta} t_{\gamma}\ +\ t_{\alpha} a_{\beta} t_{\gamma}\ +\ t_{\alpha} t_{\beta} a_{\gamma}\]
  where  $\{\alpha,\beta,\gamma\}$ runs over the above seven sets.
  And there are also seven minimally dependent subsets with four elements, namely 
    \begin{align*}
    &\{ p_1,\ p_2,\ p_3,\ p_1 p_2 p_3\},\quad
    \{ p_1,\ p_2,\ p_1 p_3,\ p_2 p_3\},\quad
    \{ p_1,\ p_3,\ p_1 p_2,\ p_2 p_3\},\\
    & \{ p_2,\ p_3,\ p_1 p_2,\ p_1 p_3\},\quad
      \{ p_1,\ p_1 p_2,\ p_1 p_3,\ p_1 p_2 p_3\},\\
   &\{ p_2,\ p_1 p_2,\ p_2 p_3,\ p_1 p_2 p_3\}\text{\quad and\quad}
    \{ p_3,\ p_1 p_3,\ p_2 p_3,\ p_1 p_2 p_3\}\ .
  \end{align*}
  Each such set $\{\alpha,\beta,\gamma,\delta\}$ gives a minimal relation of the form
  \[  a_{\alpha} t_{\beta} t_{\gamma} t_\delta\ +\ t_{\alpha} a_{\beta} t_{\gamma} t_\delta \ +\ t_{\alpha} t_{\beta} a_{\gamma}t_\delta\  +\ t_{\alpha} t_{\beta} t_{\gamma}a_\delta \ .\]
\end{eg}

\begin{rk}[The number of minimally dependent sets]\label{rk:count_relations}
  We let $A$ be an elementary abelian 2-group of rank $r$.
  As we explained in Remark \ref{rk:cardinality_minimal}, minimally dependent subsets of $A^\circ$
  have at least 3 elements, and at most $r+1$ elements.
  We shall now count how many of these there are. We consider $2\leq k\leq r$.
  Every minimally dependent subset with $k+1$ elements is obtained from
  a linearly independent $k$-element subset $S$ of $A^\circ$ by adding to it
  the product of all its members $\sigma=\prod_{\lambda\in S}\lambda$.
  The resulting minimally dependent subset $S\cup\{\sigma\}$
  can be obtained in $k+1$ different ways from an unordered linearly independent $k$-element set, 
  depending on which of the elements plays the role of the product.
  An $r$-dimensional $\mF_2$-vector space has
  \[ (2^r-1)(2^r-2)\cdot\ldots\cdot(2^r-2^{k-1}) / k !\]
  linearly independent $k$-element subsets. Hence there are 
  \[ (2^r-1)(2^r-2)\cdot\ldots\cdot(2^r-2^{k-1})/(k+1)!\]
  minimally dependent subsets of $A^\circ$ with $k+1$ elements.
  So the total number of relations in the minimal presentation of $H(A,\star)$
  given by Theorem \ref{thm:Bredon_RO-graded} is
  \[ \sum_{k=2}^r\, \prod_{i=1}^k  \frac{2^r-2^{i-1}}{i+1}  \ .\]
  The following table  shows the number $2 (2^r-1)$ of polynomial generators
  of the representation-graded ring $H(A,\star)$,
  and the above number of minimal relations for small ranks:
  \begin{center}
\begin{tabular}{l|rrrrrrr}
    \text{rank$(A)$}          & 1 & 2 & 3 & 4 & 5 & 6 & 7 \\
    \hline
    \text{no.\ of variables} & 2 & 6 & 14 & 30 & 62 & 126 & 254 \\
    \text{no.\ of relations}   & 0 & 1 & 14 & 308 & 20,336 & 4,994,472 & 4,610,816,280
\end{tabular}
  \end{center}
While the number of relations grows very quickly with the rank, this is in a sense due to
the growing number of automorphisms.
Indeed, the automorphism group of $A$ acts transitively
on the minimally dependent sets of nontrivial $A$-characters of fixed cardinality.
So up to automorphisms of $A$, there is only one relation of degree $k+1$
for all $2\leq k\leq\text{rank}(A)$.\end{rk}

\section{Bredon homology with pre-Euler and inverse Thom classes inverted} \label{sec:localizations}

In this section we study certain localizations of the representation-graded
Bredon homology ring $H(A,\star)$.
We fix a subgroup $B$ of $A$ and consider the ring
obtained by inverting all pre-Euler classes that restrict nontrivially to $B$,
and all inverse Thom classes that restrict trivially to $B$.
Our presentation of $H(A,\star)$ yields a presentation
of the localized ring, see Theorem \ref{thm:localization}.

The localization by inverting all pre-Euler classes is also known as the
geometric fixed point ring of the Eilenberg-MacLane spectrum $H\underline\mF_2$;
Holler and Kriz \cite{holler-kriz} previously obtained a presentation
of it, which we recover as the special case $B=A$.
The mixed localizations were considered by Balmer and Gallauer \cite{balmer-gallauer};
we improve their \cite[Theorem 8.13]{balmer-gallauer},
for the prime $2$, from a `presentation modulo nilpotence' to
an actual presentation, see Remark \ref{rk:balmer-gallauer}.

\begin{construction}[Localizations]\label{con:invert classes}
  We let $B$ be a subgroup of an elementary abelian 2-group $A$.
  We let $H(A|B)$ be the integer-graded part of the localization
  of the representation-graded ring $H(A,\star)$ obtained by inverting the following classes:
  \begin{itemize}
  \item all classes $a_\lambda$ for all $A$-characters $\lambda$ such that $\lambda|_B$
    is nontrivial, and
  \item all classes $t_\lambda$ for all $A$-characters $\lambda$ such that $\lambda|_B$
    is trivial.
  \end{itemize}
  Every element of $H_k(A|B)$ is then of the form $x/a_V t_W$
  for some $A$-representation $V$ with $V^B=0$,
  some $A$-representation $W$ with $W^A=0$ on which $B$ acts trivially,
  and some $x\in H_{k+\dim(W)}(A,V\oplus W)$.
  These elements satisfy the relations
  \[ x/a_Vt_W \ = \  (x\cdot a_{\bar V} t_{\bar W})/a_{V\oplus \bar V}t_{W\oplus \bar W} \]
  for all $A$-representations $\bar V$ and $\bar W$  with the corresponding properties.
  The localizations introduce enough graded units so that the representation-grading
  effectively collapses to an integer-grading. In other words, we are not losing any
  information by restricting attention to the integer-graded subrings of the localizations.
\end{construction}  

The following theorem `localizes' the presentation of
the representation-graded ring $H(A,\star)$ from Theorem \ref{thm:Bredon_RO-graded}
to a presentation of the integer-graded ring $H(A|B)$.
A noticeable feature is that those relations in the presentation of $H(A,\star)$
that involve minimally dependent sets of four or more characters
become redundant in the localization.

\begin{theorem}\label{thm:localization}
  For every subgroup $B$ of an elementary abelian 2-group $A$,
  the graded ring $H(A|B)$ is a domain.
  The ring $H(A|B)$ is generated by
  \begin{itemize}
  \item 
    the homogeneous elements $x_\lambda=t_\lambda/a_\lambda$ of degree $1$,
    for $\lambda\in A^\circ$ with $\lambda|_B\ne 1$, and
  \item the homogeneous elements $e_\lambda=a_\lambda/t_\lambda$
    of degree $-1$, for $\lambda\in A^\circ$ with $\lambda|_B= 1$.
\end{itemize}
The ideal of relations between these generators is generated by
    the following polynomials:
    \begin{itemize}
    \item $x_\alpha x_\beta + x_\alpha x_\gamma + x_\beta x_\gamma$ for all triples
      of nontrivial $A$-characters such that $\alpha\cdot\beta\cdot\gamma=1$
      and such that $\alpha$, $\beta$ and $\gamma$ are nontrivial on $B$;      
    \item $x_\alpha+ x_\beta + x_\alpha x_\beta e_\gamma$ for all triples
      of nontrivial $A$-characters such that $\alpha\cdot\beta\cdot\gamma=1$,
      such that $\alpha$ and $\beta$ are nontrivial on $B$, and $\gamma|_B=1$;
    \item $e_\alpha+ e_\beta + e_\gamma$ for all triples
      of nontrivial $A$-characters such that $\alpha\cdot\beta\cdot\gamma=1$
      and $\alpha|_B=\beta|_B=\gamma|_B=1$.
    \end{itemize}
  \end{theorem}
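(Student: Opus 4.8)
The plan is to deduce the presentation of $H(A|B)$ from the presentation of $H(A,\star)$ in Theorem \ref{thm:Bredon_RO-graded} by explicitly carrying out the localization. First I would set up the ground ring: introduce a polynomial algebra $P$ over $\mF_2$ on the symbols $x_\lambda$ (degree $1$, for $\lambda\in A^\circ$ with $\lambda|_B\ne 1$) and $e_\lambda$ (degree $-1$, for $\lambda\in A^\circ$ with $\lambda|_B=1$), together with the map $P\to H(A|B)$ sending $x_\lambda\mapsto t_\lambda/a_\lambda$ and $e_\lambda\mapsto a_\lambda/t_\lambda$. Surjectivity of this map follows from Theorem \ref{thm:Bredon_RO-graded}(i): every element of the localization is $x/(a_V t_W)$ for suitable representations, and writing $x$ as a polynomial in the $a_\mu$ and $t_\mu$ and dividing through, one sees that each monomial becomes a monomial in the $x_\lambda^{\pm}$ and $e_\lambda^{\pm}$ — one must just check that for each character $\mu$ at least one of $a_\mu, t_\mu$ has been inverted, which is exactly the definition of $H(A|B)$; and then $t_\mu/a_\mu = x_\mu$ (or its inverse is $e_\mu$ when $\mu|_B=1$), etc. The three listed families of polynomials clearly map to zero: they are obtained from the degree-$3$ relations $r(\{\alpha,\beta,\gamma\})$ of Theorem \ref{thm:Bredon_RO-graded} by dividing by $t_\alpha t_\beta t_\gamma$ (first family), by $a_\gamma t_\alpha t_\beta$ (second family), or by $a_\alpha a_\beta a_\gamma$ (third family), so they lie in the kernel by Proposition \ref{prop:x-relation}(ii).

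The main work is showing these relations generate the whole kernel. The clean way is to identify the quotient $\bar P$ of $P$ by the ideal generated by the three listed families, and produce a ring map $H(A|B)\to\bar P$ inverse (on generators) to $P\to H(A|B)$. Equivalently — and this is the route I would take — I would show that the composite $\mF_2[a_\mu,t_\mu\colon\mu\in A^\circ]\to H(A,\star)\to H(A|B)$ factors through $\bar P$ after inverting the appropriate classes, i.e. that the localized polynomial ring $\mF_2[a_\mu,t_\mu][a_\lambda^{-1}\colon\lambda|_B\ne 1][t_\lambda^{-1}\colon\lambda|_B=1]$, modulo the localized relation ideal $I(A)$, is isomorphic to $\bar P$. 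The localized polynomial ring is visibly isomorphic to $P$ (each pair $a_\mu,t_\mu$ contributes one invertible generator, whose value is $t_\mu/a_\mu=x_\mu^{\pm1}$ or $a_\mu/t_\mu=e_\mu^{\pm1}$ depending on $\mu|_B$, plus the common unit $a_\mu$ or $t_\mu$ which disappears from the integer-graded part). Under this isomorphism I must show the localized ideal $I(A)$ corresponds exactly to the ideal generated by the three families. Each generator $r(T)$ of $I(A)$, once localized, can be divided by $\prod_{\mu\in T} t_\mu$ (if $T$ has an element nontrivial on $B$, all the $t_\mu$ for that element are units; more carefully, divide by $\prod_{\mu\in T, \mu|_B\ne 1} a_\mu \cdot \prod_{\mu\in T,\mu|_B=1} t_\mu$, which is a unit) to rewrite it in terms of the $x_\lambda,e_\lambda$.

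The heart of the argument is then a purely algebraic fact: \emph{in $P$, the ideal generated by the localized relations $\bar r(T)$ for all minimally dependent $T$ equals the ideal generated by just the three listed degree-$\le 3$ families}, i.e. localization makes all $r(T)$ with $|T|\ge 4$ redundant. This is the analogue of the reduction already observed by Holler--Kriz in the geometric fixed-point case $B=A$. I would prove it by downward induction on $|T|$: given a minimally dependent $T$ with $|T|=k+1\ge 4$, pick $\lambda,\mu\in T$ with $\lambda\mu\ne$ any single element of $T$ (possible once $k\ge 3$), set $\nu=\lambda\mu$, and observe that $T'=(T\setminus\{\lambda,\mu\})\cup\{\nu\}$ is again minimally dependent of smaller cardinality, while $\{\lambda,\mu,\nu\}$ has product $1$; then combine $\bar r(T')$ with the appropriate degree-$3$ relation among $x_\lambda,x_\mu,x_\nu$ (or the mixed/$e$-versions) to express $\bar r(T)$ in the smaller ideal. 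The case analysis according to how many of $\lambda,\mu,\nu$ are trivial on $B$ produces exactly the three families. \textbf{The main obstacle} is bookkeeping this induction cleanly: verifying that one can always choose $\lambda,\mu$ so that $\lambda\mu\notin T$ (this uses minimality and $k\ge 3$, since $T$ minimally dependent forces $\prod_{\rho\in T}\rho=1$ and no proper subset multiplies to $1$), and checking that every character appearing in the reduction has its relevant class inverted so that the division steps are legal in the localization. Once that is in hand, domain-ness of $H(A|B)$ follows from Theorem \ref{thm:domain}: a localization of a domain is a domain, and passing to the integer-graded subring preserves this.
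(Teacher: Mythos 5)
Your proposal follows essentially the same route as the paper: localize the presentation of Theorem \ref{thm:Bredon_RO-graded}, recognize $H(A|B)$ as the quotient of the polynomial ring on the $x_\lambda$ and $e_\mu$ by the localized relations $\bar r(T)$, and then show by induction on the cardinality of $T$ --- replacing two elements $\alpha,\beta\in T$ by $\gamma=\alpha\beta\notin T$, so that $(T\setminus\{\alpha,\beta\})\cup\{\gamma\}$ and $\{\alpha,\beta,\gamma\}$ are smaller minimally dependent sets, and casing on whether $\gamma|_B$ is trivial so that the class divided by ($t_\gamma$ or $a_\gamma$) is a unit --- that every $\bar r(T)$ with $|T|\ge 4$ lies in the ideal generated by the ternary relations, with the domain statement deduced from Theorem \ref{thm:domain} exactly as in the paper. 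The only blemish is the slip in your opening sentence where the monomials dividing $r(\{\alpha,\beta,\gamma\})$ have $a$ and $t$ interchanged (they should be $a_\alpha a_\beta a_\gamma$, $a_\alpha a_\beta t_\gamma$ and $t_\alpha t_\beta t_\gamma$ for the three families, as your own later general formula $\prod_{\mu|_B\ne 1}a_\mu\cdot\prod_{\mu|_B=1}t_\mu$ correctly prescribes), which does not affect the argument.
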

\begin{proof}
  The multigraded ring $H(A,\star)$ is a domain by Theorem \ref{thm:domain},
  hence so is any localization at a multiplicative subset of homogeneous elements.
  Since $H(A|B)$ is a subring of such a localization, it is a domain, too.

  We write $\chi_+=\{\lambda\in A^\circ\colon \lambda|_B\ne 1\}$
  and $\chi_-=\{\lambda\in A^\circ\colon \lambda|_B= 1\}$.
  If we take the presentation of $H(A,\star)$
  given by Theorem \ref{thm:Bredon_RO-graded}, invert the relevant pre-Euler and
  inverse Thom classes, and restrict to integer-gradings,
  we recognize $H(A|B)$ as the quotient of the polynomial ring
  $\mF_2[x_\lambda, e_\mu\colon \lambda\in \chi_+,\mu\in\chi_-]$
  by the ideal generated by the polynomials
  \begin{align*}
    \bar r(T)\ &= \ r(T)/(\prod_{\mu\in T\cap\chi_+} a_\mu \cdot \prod_{\mu\in T\cap \chi_-} t_\mu) \\
    &= \ (\sum_{\lambda\in T\cap \chi_+} \prod_{\mu\in (T\cap\chi_+)\setminus\{\lambda\}} x_\mu)\
  + \ (\sum_{\lambda\in T\cap \chi_-} e_\lambda )\cdot\!\prod_{\mu\in T\cap\chi_+} x_\mu
  \end{align*}
  for all minimally dependent subsets $T$ of $A^\circ$.
  Among these are the minimally dependent subsets $T=\{\alpha,\beta,\gamma\}$
  that have three elements.
  We note that because $\alpha\cdot\beta\cdot\gamma=1$,
  whenever two of $\alpha$, $\beta$ and $\gamma$ are trivial on $B$, then so is the third.
  So the relations for  minimally dependent subsets with three elements
  are the ones from the statement of the theorem.
  
  In the rest of the proof we show that the polynomials $\bar r(T)$
  for  minimally dependent subsets $T$ with more than three elements
  are in the ideal generated by those with three elements.
  We argue by induction on the cardinality of $T$,
  and we let $T$ be a minimally dependent subset of $A^\circ$ with at least 4 elements.
  We pick two distinct elements $\alpha\ne \beta$ from $T$;
  by minimality, the product $\gamma=\alpha\cdot\beta$ then does not belong to $T$.
  We set $S=T\setminus\{\alpha,\beta\}$. We claim that 
  the polynomial $\bar r(T)$ lies in the ideal generated by
  $\bar r(S\cup\{\gamma\})$ and  $\bar r(\{\alpha,\beta,\gamma\})$.
  The sets $S\cup\{\gamma\}$ and $\{\alpha,\beta,\gamma\}$ are both minimally
  dependent, and both have fewer elements than $T$.
  So by induction, $\bar r(T)$ belongs to the ideal generated by the ternary relations.

  It remains to prove the claim. We distinguish two cases,
  depending on whether the restriction of $\gamma$ to $B$ is trivial or not.
  If $\gamma|_B= 1$, we exploit the polynomial relation 
  \[ t_{\gamma}\cdot r(T)\ = \   t_\alpha t_\beta\cdot r(S\cup\{\gamma\})
   \ + \ r(\{\alpha,\beta,\gamma\})\cdot \prod_{\lambda\in S}t_\lambda \]
 that holds by inspection.
 Because the class $t_\gamma$ is among those being inverted,
 this relation shows that the polynomial $\bar r(T)$ lies in the ideal generated by
 $\bar r(S\cup\{\gamma\})$ and $\bar r(\{\alpha,\beta,\gamma\})$.
 If $\gamma|_B\ne 1$, we exploit the relation 
 \[ a_{\gamma}\cdot r(T)\ = \
   (a_\alpha t_\beta+t_\alpha a_\beta)\cdot r(S\cup\{\gamma\})\ + \
   r(\{\alpha,\beta,\gamma\})\cdot r(S)  \]
 that also holds by inspection.
 Because the class $a_\gamma$ is among those being inverted,
 this relation shows that the polynomial $\bar r(T)$ lies in the ideal generated by
 $\bar r(S\cup\{\gamma\})$ and  $\bar r(\{\alpha,\beta,\gamma\})$.
\end{proof}

\begin{rk}[Relation to the work of Balmer and Gallauer]\label{rk:balmer-gallauer}
  In \cite{balmer-gallauer},
  Balmer and Gallauer also study the representation-graded Bredon homology ring
  $H(A,\star)$ as input for their computation of the Balmer spectrum of the tt-category of
  permutation modules;
  this ring is called the {\em twisted cohomology ring} of $A$
  in \cite[Definition 3.16]{balmer-gallauer}, and denoted $H^{\bullet\bullet}(A)$.
  The work of Balmer-Gallauer also covers elementary abelian $p$-groups
  for odd primes $p$, which we do not consider.
  The connection to Bredon homology comes from the fact that the homotopy category
  of permutation $A$-modules is equivalent to the homotopy category of modules in
  genuine $A$-spectra over the Eilenberg-MacLane spectrum $H\underline\mF_2$
  for the constant Mackey functor. 
  Under this equivalence, the invertible object $u_\lambda$
  introduced in \cite[Definition 3.3]{balmer-gallauer}
  corresponds to the representation sphere $S^\lambda$.
  Using our notation, Balmer-Gallauer prove the relation
  $ a_\alpha t_\beta  t_\gamma + t_\alpha a_\beta t_\gamma + t_\alpha t_\beta a_\gamma = 0$
  in \cite[Lemma 8.4]{balmer-gallauer},
  for all triples of $A$-characters whose product is 1.

  The localization $H(A|B)$ is introduced as $\mathcal O^\bullet_A(B)$
  in \cite[Definition 5.9]{balmer-gallauer}.
  In \cite[Construction 8.5]{balmer-gallauer},
  Balmer-Gallauer define a ring $\underline{\mathcal O}_A^\bullet(B)$
  by generators and relations as in the presentation of Theorem \ref{con:invert classes}.
  Then they show in \cite[Theorem 8.13]{balmer-gallauer}
  that the morphism $\underline{\mathcal O}_A^\bullet(B)\to \mathcal O_A^\bullet(B)$
  becomes an isomorphism after modding out the respective nilradicals.
  In the notation of Balmer-Gallauer,
  the content of our Theorem \ref{con:invert classes} is that
  $\underline{\mathcal O}_A^\bullet(B)\to \mathcal O_A^\bullet(B)$
  is already an isomorphism, without the need to divide out any ideal.
  In particular, our theorem confirms the expectation formulated in
  \cite[Remark 8.15]{balmer-gallauer}.
\end{rk}

For every genuine equivariant ring spectrum, the localization of the representation-graded
homotopy ring at the pre-Euler classes of all nontrivial irreducible representations
is isomorphic to the so-called {\em geometric fixed point ring},
see \cite[Proposition 3.20]{greenlees-may:eqstho}.
In particular, for every elementary abelian 2-group $A$,
the localization $H(A|A)$ is isomorphic to the geometric fixed point ring
$\Phi^A_*(H\underline\mF_2)$ of the Eilenberg-MacLane ring spectrum
of the constant Mackey functor $\underline \mF_2$.
Holler and Kriz gave a presentation of this ring in \cite[Theorem 2]{holler-kriz}. 
Their result is the special case $A=B$ of Theorem \ref{thm:localization}:

\begin{cor}\cite[Theorem 2]{holler-kriz}\label{cor:holler-kriz}
  Let $A$ be an elementary abelian 2-group.
  The graded ring $\Phi^A_*(H\underline\mF_2)=H(A|A)$ is generated by
  the homogeneous elements $x_\lambda=t_\lambda/a_\lambda$ of degree $1$,
  for all nontrivial $A$-characters $\lambda$.
  The ideal of relations between these generators is generated by
   the quadratic polynomials
   \[ x_\alpha x_\beta + x_\alpha x_\gamma + x_\beta x_\gamma \]
   for all triples
   of nontrivial $A$-characters such that $\alpha\cdot\beta\cdot\gamma=1$.
\end{cor}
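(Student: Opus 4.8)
The plan is to obtain the corollary as the special case $B = A$ of Theorem \ref{thm:localization}, combined with the identification of $H(A|A)$ with the geometric fixed point ring already recalled in the paragraph preceding the statement. For that identification I would invoke the general fact that, for a genuine $A$-equivariant ring spectrum, the localization of the representation-graded homotopy ring obtained by inverting the pre-Euler classes of all nontrivial irreducible representations is the geometric fixed point ring, as in \cite[Proposition 3.20]{greenlees-may:eqstho}. Since for an elementary abelian $2$-group every nontrivial irreducible is a character, and the recipe defining $H(A|A)$ inverts precisely the classes $a_\lambda$ for $\lambda\in A^\circ$ (and no inverse Thom classes, the only characters trivial on $B = A$ being the trivial one, whose inverse Thom class is the unit), this yields a graded ring isomorphism $\Phi^A_*(H\underline\mF_2)\cong H(A|A)$.

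Next I would feed the choice $B = A$ into Theorem \ref{thm:localization}. A nontrivial $A$-character is by definition nontrivial on all of $A$, so $\chi_+ = A^\circ$ and $\chi_- = \emptyset$. With this input, the generating set of Theorem \ref{thm:localization} collapses to the degree-$1$ classes $x_\lambda = t_\lambda/a_\lambda$ for $\lambda\in A^\circ$, and there are no degree $-1$ generators $e_\lambda$. Among the three families of defining relations, the second and third are indexed by triples containing a character trivial on $B$ and are therefore vacuous, while in the first family the side condition ``$\alpha,\beta,\gamma$ nontrivial on $B$'' is automatically met. What survives is exactly the family of quadratic relations $x_\alpha x_\beta + x_\alpha x_\gamma + x_\beta x_\gamma$ over all triples of nontrivial $A$-characters with $\alpha\cdot\beta\cdot\gamma = 1$, which is the asserted presentation.

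As the corollary is a direct specialization, I do not anticipate a genuine obstacle. The only point deserving care is the matching of conventions: one must check that the grading on $H(A|A)$ coming from the integer-graded part of the localization of $H(A,\star)$ agrees with the homotopical grading on $\Phi^A_*(H\underline\mF_2)$ under the isomorphism of \cite[Proposition 3.20]{greenlees-may:eqstho}, and that the class $x_\lambda$ is identified with the generator named in \cite[Theorem 2]{holler-kriz}; both verifications are routine once the isomorphism is written down explicitly. One may additionally note that $\Phi^A_*(H\underline\mF_2)$ inherits the domain property from Theorem \ref{thm:localization}, although this is not part of the assertion of the corollary.
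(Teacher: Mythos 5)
Your argument is correct and is exactly the paper's route: the corollary is stated as the special case $B=A$ of Theorem \ref{thm:localization}, with the identification $\Phi^A_*(H\underline\mF_2)\cong H(A|A)$ supplied by \cite[Proposition 3.20]{greenlees-may:eqstho}, and your observation that $\chi_-=\emptyset$ kills the $e_\lambda$-generators and the second and third families of relations matches the intended specialization.
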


\begin{rk} \label{rk:domain}
  As observed and expanded upon by S.\,Kriz in \cite{skriz}, the ring $H(A|A)$ agrees
  with the `reciprocal plane of the arrangement of all non-trivial hyperplanes of $A$'
  previously studied in the algebra literature.
  We refer to \cite{skriz} and the references therein for more information on this point of view.
  The identification with the reciprocal plane in particular implies that $H(A|A)$ is a domain.
  The localization maps $H_m(A,W)\to H_m(A|A)$ are injective,
  see \cite[Theorem 5 (ii)]{holler-kriz} or our Proposition \ref{prop:fundamental} (ii);
  so $H(A,\star)$ is a domain, too, 
  which we proved more directly in Theorem \ref{thm:domain}.
\end{rk}

If we specialize Theorem \ref{thm:localization} to the other extreme $B=\{1\}$,
we see that $H(A|\{1\})$ is generated by the Euler classes $e_\lambda=a_\lambda/t_\lambda$
for all nontrivial $A$-characters $\lambda$, and the ideal of relations is
generated by the linear polynomials $e_\alpha+e_\beta=e_\gamma$
for all triples of nontrivial $A$-characters such that $\alpha\cdot\beta=\gamma$.
So if $A$ has rank $r$, and $\lambda_1,\dots,\lambda_r$ is a basis of the vector space
of $A$-characters, then already the Euler classes $e_{\lambda_1},\dots,e_{\lambda_r}$
generate $H(A|\{1\})$, and there are no further relations between these.
In other words, the ring $H(A|\{1\})$ obtained by inverting all
inverse Thom classes is an $\mF_2$-polynomial algebra in the classes 
$e_{\lambda_1},\dots,e_{\lambda_r}$, which agrees with the group cohomology ring $H^*(A;\mF_2)$.


\begin{thebibliography}{EF}
  
\bibitem{balmer-gallauer}
  P Balmer, M Gallauer, {\em The tt-geometry of permutation modules. Part II: Twisted cohomology.}\\
  \verb!arXiv:2307.04398v1!  

\bibitem{bredon}
  G E Bredon, {\em Equivariant cohomology theories.}
  Lecture Notes in Math., 34, Springer-Verlag, 1967, vi+64 pp.

\bibitem{ellis-bloor}
  B Ellis-Bloor, {\em The Klein four homotopy Mackey functor structure of $H\underline{\mF_2}$.}
  Honours thesis, Australian National University, 2020.\\
  \verb!https://openresearch-repository.anu.edu.au/bitstream/1885/258402/1/Honours_thesis.pdf!
  
\bibitem{greenlees-may:eqstho}
  J P C Greenlees, J P May, {\em Equivariant stable homotopy theory.}
  Handbook of algebraic topology, 277--323, North-Holland, Amsterdam, 1995. 

\bibitem{hausmann-schwede:bordism}
  M Hausmann, S Schwede,
  {\em The universal property of bordism of commuting involutions.}\\
    \verb!arXiv:2406.00404v1!  

\bibitem{holler-kriz}
  J Holler, I Kriz,
  {\em On $RO(G)$-graded equivariant "ordinary'' cohomology where $G$ is a power of $\mZ/2$.}
  Algebr. Geom. Topol. 17 (2017), no.\ 2, 741--763.

\bibitem{skriz}
	S Kriz,
	{\em Equivariant cohomology and the super reciprocal plane of a
              hyperplane arrangement.}
	Algebr. Geom. Topol. 22 (2022), no.\ 3, 991--1015.

\bibitem{illman:equivariant singular}
  S Illman, {\em Equivariant singular homology and cohomology. I.}
  Mem. Amer. Math. Soc. 1 (1975), issue 2, no.\ 156, ii+74 pp. 
  
\bibitem{lewis:R O_projective}
  L G Lewis, Jr.,
  {\em The $R O(G)$-graded equivariant ordinary cohomology of complex projective spaces
    with linear $\mZ/p$ actions.}
  Algebraic topology and transformation groups (G{\"o}ttingen, 1987), 53--122,
  Lecture Notes in Math., 1361, Springer-Verlag, 1988.

\end{thebibliography}
\end{document}